\documentclass{article}
\usepackage[utf8]{inputenc}

\usepackage{longtable}
\usepackage{latexsym,amsmath,amstext,amsfonts,amscd,amsthm,upref,graphics}
\usepackage{enumitem,lipsum}
\usepackage{graphicx}
\usepackage{color,xcolor}
\usepackage[]{algorithm2e}
\usepackage{float}
\restylefloat{table}
\usepackage{url}



\newcommand{\ceiling}[1]{ \left\lceil #1  \right\rceil}

\usepackage[utf8]{inputenc}

\newcommand{\ask}{$^*$}
\newcommand{\car}{$_{\hspace{.3em}\land}$}
\newcommand{\askcar}{$^*_{\hspace{.3em}\land}$}

\newcommand{\sccd }{SCCD}
\newcommand{\tsccd }{tight~SCCD}
\newcommand{\esccd }{economic~SCCD}
\newcommand{\scccd}{CSCCD}
\newcommand{\tscccd}{tight~CSCCD}
\newcommand{\escccd}{economic~CSCCD}

\newcommand{\cE}{\mathcal{E}}
\newcommand{\cL}{\mathcal{L}}
\newcommand{\cM}{\mathcal{M}}

\newcommand{\bN}{\mathbb{N}}


\definecolor{colour1}{HTML}{009901}
\definecolor{colour2}{HTML}{FE0000}
\definecolor{colour3}{HTML}{3531FF}
\definecolor{colour4}{HTML}{E64EE5}
\definecolor{colour5}{HTML}{F56B00}
\definecolor{colour6}{HTML}{6200C9}
\definecolor{colour7}{HTML}{AAAD3A}


\usepackage{tikz}
\usetikzlibrary{graphs,graphs.standard}

\usepackage[utf8]{inputenc}
\usepackage[nottoc]{tocbibind}

\theoremstyle{plain}
\newtheorem{theorem}{Theorem}[{}]

\newtheorem{lemma}[theorem]{Lemma}
\newtheorem{proposition}[theorem]{Proposition}
\newtheorem{corollary}[theorem]{Corollary}

\usepackage{setspace}
\doublespacing  
\begin{document}

\title{Linear and Circular Single Change Covering Designs Re-visited}
\author{Amanda Chafee,  Brett Stevens\\
School of Mathematics and Statistics\\
Carleton University\\
1125 Colonel By Drive\\
Ottawa, ON, K1S 5B6\\
Canada\\
\texttt{AmandaChafee@cmail.carleton.ca}, \texttt{brett@math.carleton.ca}
}

\date{\today}

\maketitle

\section{Abstract}
A \textbf{single change covering design} is a $v$-set $X$ and an ordered list $\cL$ of $b$ blocks of size $k$  where every $t$-set must occur in at least one block. Each pair of consecutive blocks differs by exactly one element. A single change covering design is circular when the first and last blocks also differ by one element. A single change covering design is minimum if no other smaller design can be constructed for a given $v, k$. 

In this paper we use a new recursive construction to solve the existence of circular \sccd($v,4,b$) for all $v$ and three residue classes of circular \sccd($v,5,b$) modulo 16. We solve the existence of three residue classes of \sccd$(v,5,b)$ modulo 16. We prove the existence of circular \sccd$(2c(k-1)+1,k,c^2(2k-2)+c)$, for all $c\geq 1, k\geq2 $, using difference methods. 

Key Words: Design theory, change designs, single change covering designs, difference families

\section{Introduction}

A \textbf{single change covering design} (\sccd($v,k,b$)) $(X,\mathcal{L})$ is a $v$-set $X$ and an ordered list $\mathcal{L}= (B_1,B_2,...,B_b)$ of blocks of size $k$  where every pair must occur in at least one block. Each pair of consecutive blocks differs by exactly one element. A \textbf{singe change covering design} (\scccd) is circular if the first and last blocks also differ by one element. These types of \sccd~ are considered strength two as every pair is present in at least one block. Instead of pairs we could require that each triple or larger $t$-sets occur in at least one block to build strength $t$ designs. 

The earliest discussion of \sccd s was in 1969 and focused on applications to efficient computing \cite{Nelder}. Consider the calculation of $AA^T$ for some matrix; this requires processing all pairs of rows of $A$. In 1969 a limited number of pairs could be stored in core memory (RAM) at one time and updating RAM from a tape was slow. Therefore, visiting all pairs with the smallest number of accesses to the tape improves running time significantly. Similar constraints apply today with respect to CPU cache vs. RAM access \cite{Tape_Drive, Core_Memory}.  

Gower and Preece continued the discussion in 1972 when they analyzed minimizing changes between successive blocks \cite{Gower}. Work continued in the 1990s by various subsets of Preece, Constable, Zhang, Yucas, Wallis, McSorley and Phillips in a series of eleven papers that substantially developed the theory. Most relevant to this work is the solution Preece, et al. give about the existence of minimum \sccd ~with $k=4$ using a recursive construction based solution on ``expansion sets'' \cite{tsccd}.

In 1998 McSorley explored circular single change covering designs \cite{circular}. He completely solved minimum \scccd($v,k,b$) where $k=2,3$ and where $v \leq 2k$. Analysing properties of elements in a design which are only introduced once, he explored substructures that must be present in circular designs to significantly reduce the work required in a computer search which lead to him constructing all the minimum \scccd(9,4,12) and \scccd(10,4,15). 

In this previous research, the powerful recursive construction used to successfully complete some families of linear \sccd ~had no analog for circular \sccd. We introduce such a recursive construction and completely solve the existence of circular \sccd ~for $k=4$ and for three residue classes of $v \pmod{16}$ when $k=5$. We also solve the existence for linear \sccd ~for three residue class of $v \pmod{16}$ when $k=5$. Further, we construct an infinite family of \scccd ~for every fixed $k$ using difference methods.


\section{Background}

In a \sccd ~every block except the last  differs from the next block by a single change. That is $|B_i \cap B_{i+1}| = k-1$ for all $1 \leq i < b$. We say the element $x=B_{i} \backslash B_{i+1}$ is \textbf{{removed}} from $B_i$ and $y=B_{i+1} \backslash B_{i}$ is \textbf{{introduced}} in $B_{i+1}$. If the \sccd ~is circular, one element is introduced in each block. If the \sccd ~is linear, every element in $B_1$ is introduced and one element is introduced in each subsequent block. A pair is \textbf{{covered}} on block $B_i$ if $S \subseteq B_i$ and one element of $S$ was introduced in $B_i$. In this paper we will only consider pairs, but the study of \sccd ~for higher strength sets is of interest. We say a \sccd$(v,k,b)$ is \textbf{minimum} if no \sccd$(v,k,b')$ exists for $b'<b$.

An example \sccd(7,3,10) is given in Table~\ref{Table: tsccd(7,3,2)}. 
We emphasize the introduced element in each block with an asterisk, \textbf{*}. Note that elements 1, 2, and 3 are all introduced in the first block so the pairs $\{1,2\},\{1,3\},\{2,3\}$ are covered in $B_1$. In $B_2$ only 4 is introduced, so only pairs involving 4 are covered, namely $\{1,4\},\{2,4\}$. 

\begin{table}[H]
\centering
\begin{tabular}{llllllllll}
$B_1$ & $B_2$ & $B_3$ & $B_4$ & $B_5$ & $B_6$ & $B_7$ & $B_8$ & $B_9$ & $B_{10}$\\
1* & 1  & 1  & 6* & 7* & 7  & 4* & 4  & 4   & 1* \\
2* & 2  & 2  & 2  & 2  & 3* & 3  & 3  & 7*  & 7  \\
3* & 4* & 5* & 5  & 5  & 5  & 5  & 6* & 6   & 6
\end{tabular}
\caption{\sccd(7,3,10)}
\label{Table: tsccd(7,3,2)}\label{Table: sccd(7,3,2)} \label{Table: sccd(7,3,2,10)}
\end{table}

In Table~\ref{Table: tsccd(7,3,2)} we note that every pair is covered exactly once. However, in Table~\ref{Table: escccd(6,3,2)} we have a \scccd(6,3,8) where the pair $\{1,4\}$ is covered two times.

\begin{table}[H]
\centering
\caption{\scccd(6,3,8). The pair \{1,4\} is covered in $B_5$ and $B_8$. \cite{circular}}
\label{Table: escccd(6,3,2)} \label{Table: scccd(6,3,2,8)}
\begin{tabular}{ll l lll l l}
$B_1$ & $B_2$ & $B_3$ & $B_4$ & $B_5$ & $B_6$ & $B_7$ & $B_8$    \\
6\ask & 6     & 6        & 6     & 4\ask & 4     & 2\ask & 2     \\
4     & 3\ask & 3        & 3     & 3     & 5\ask & 5     & 4\ask \\
2     & 2     & 5\askcar & $1^*$ & 1     & 1\car & 1     & 1\car
\end{tabular}
\end{table}

The set of elements that remain the same between $B_i$ and $B_{i+1}$ of a \sccd ~is the $i^{th}$ \textbf{unchanged subset} between these blocks, $U_i=B_i \cap B_{i+1}$ where $i+1$ is taken modulo $b$ in a \scccd \cite{tsccd}. If the \sccd ~is not circular, $U_0$ and $U_b$  can be any $(k-1)$-subset of $B_1$ or $B_b$ respectively. In the \sccd(7,3,10) in Table~\ref{Table: tsccd(7,3,2)}, the unchanged subsets are $U_1$=$\{1,2\},$ $U_2$=$\{1,2\},$ $U_3$=$\{2,5\},$ $U_4$=$\{2,5\},$ $U_5$=$\{7,5\},$ $U_6$=$\{3,5\},$ $U_7$=$\{4,3\},$ $U_8=\{4,$ $6\},$ $U_9$=$\{7,6\}$. Additionally, $U_0$ could be $\{1,2\},$ $\{1,3\}$, or $\{2,3\}$, and $U_{10}$ could be $\{1,7\},$ $\{6,7\},$ or $\{1,6\}$. 
If there exists $\{i_j: 1\leq j\leq \frac{v}{k-1}\}$ a disjoint union of unchanged subsets such that 
$$ X = \dot\bigcup^{\frac{v}{k-1}}_{j=1} U_{i_j} $$ 
then we say that $\cE = \{U_{i_1},U_{i_2},...,U_{i_{\frac{v}{(k-1)}} }\}$ is an \textbf{{expansion set}}. If the \sccd ~is not circular and $\cE$ contains $U_0$, $U_b$ or both, then $\cE$ is an \textbf{{outer expansion set}}, otherwise $\cE$ is an \textbf{{inner expansion set}}. We will denote expansion set locations in our tables with carets, \textbf{$\land$}. In Table~\ref{Table: escccd(6,3,2)}, $ \cE = \{U_3=\{3,6\}, U_6=\{1,5\}, U_8=\{2,4\} \}$ is an expansion set.

Let $g_1(v,k,2) = \frac{\binom{v}{2}-\binom{k}{2}}{k-1}+1$ and $g_2(v,k,2) = \frac{\binom{v}{2}}{k-1}$. Wallis et al. showed.
 
\begin{lemma}~\label{Lemma: Number of blocks sccd(v,k,2)} \cite{sccd}
In a \sccd($v,k,b$), $b \geq g_1(v,k,2).$ In a circular \newline\sccd($v,k,b$), $b\geq g_2(v,k,2).$
\end{lemma}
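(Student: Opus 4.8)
The plan is to prove the two bounds by a counting argument on how many pairs can be newly covered in each block. First I would set up the basic accounting: in a \sccd($v,k,b$), exactly one element is introduced in each block after the first, and that newly introduced element together with the $k-1$ surviving elements of the previous block gives at most $\binom{k}{2}-\binom{k-1}{2}=k-1$ pairs that can be covered for the first time on that block. So for a linear design, the first block $B_1$ covers at most $\binom{k}{2}$ pairs, and each of the remaining $b-1$ blocks covers at most $k-1$ new pairs; since every one of the $\binom{v}{2}$ pairs must be covered somewhere, we get $\binom{v}{2}\le \binom{k}{2}+(b-1)(k-1)$, which rearranges to $b\ge \frac{\binom{v}{2}-\binom{k}{2}}{k-1}+1=g_1(v,k,2)$.

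For the circular case the argument is the same except that now \emph{every} block, including $B_1$, has exactly one introduced element (the circular condition forces $B_b$ and $B_1$ to differ in one element, so $B_1$ is not "free"). Hence each of the $b$ blocks covers at most $k-1$ new pairs, giving $\binom{v}{2}\le b(k-1)$ and therefore $b\ge \frac{\binom{v}{2}}{k-1}=g_2(v,k,2)$.

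The key steps, in order, are: (1) formalize "pair covered on block $B_i$" exactly as in the paper's definition (the pair lies in $B_i$ and one of its two elements was introduced in $B_i$); (2) show that the set of pairs covered on $B_i$ has size at most $k-1$ when one element is introduced, and at most $\binom{k}{2}$ when all of $B_1$ is introduced in the linear case — this is just counting the pairs through the single new element among $k-1$ old ones; (3) observe that since consecutive blocks differ in exactly one element, the "introduced" element is well-defined and every pair must be covered on \emph{some} block, so the sets of covered pairs over all blocks union to all $\binom{v}{2}$ pairs; (4) sum the per-block bounds and solve for $b$, treating the linear and circular cases separately according to whether $B_1$ contributes $\binom{k}{2}$ or $k-1$.

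The only real subtlety — and the step I would be most careful about — is justifying that a pair covered "for the first time" on some block is indeed covered on a block where one of its elements is newly introduced: a pair $\{x,y\}\subseteq B_i$ with neither $x$ nor $y$ introduced in $B_i$ must already have appeared together in $B_{i-1}$, so by induction it traces back to a block where one of them entered (or to $B_1$ in the linear case). This is what lets us charge every pair to a block's "new pair" budget without double-counting issues affecting the inequality. Everything else is elementary arithmetic, so I do not anticipate a genuine obstacle; the proof is short.
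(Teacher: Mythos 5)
The paper does not actually prove this lemma; it quotes it from Wallis et al.\ \cite{sccd}. Your counting argument is the standard proof of it: bound the number of pairs covered on each block by $k-1$ (by $\binom{k}{2}$ for $B_1$ in the linear case), observe that every pair must be covered on some block, and sum. The linear bound is complete as you wrote it, because your trace-back always terminates at $B_1$, where every element is introduced.

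The circular case has a genuine gap at exactly the step you flagged. Your backward induction ``traces back to a block where one of them entered'' only if at least one of $x,y$ is introduced somewhere; if neither is ever introduced, both elements lie in every block and the induction cycles around the design without terminating. This is not a vacuous worry under the paper's literal definition, which only asks that every pair \emph{occur} in some block: take $v=5$, $k=4$ and the three blocks $\{2,3,4,5\}$, $\{1,3,4,5\}$, $\{1,2,4,5\}$. Consecutive blocks (cyclically) differ in exactly one element and every pair of $\{1,\dots,5\}$ occurs in some block, yet $b=3<10/3=g_2(5,4,2)$; the pair $\{4,5\}$ occurs in every block but is never covered, so it escapes your per-block budget. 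The circular bound is therefore only valid under the convention --- used implicitly throughout the rest of the paper --- that every pair must be \emph{covered} (lie in a block in which one of its elements is introduced), not merely occur. With that convention your step (3) is immediate and no trace-back is needed at all; without it, the circular half of the lemma is false as stated. You should either adopt the stronger covering requirement explicitly, or add an argument ruling out (or separately handling) a two-element ``core'' of elements present in every block and never introduced.
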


We say a (circular) \sccd($v,k,b$) is \textbf{{economical}} if it has $\ceiling{g_1(v,k,2)}$ $\big(\ceiling{g_2(v,k,2)}\big)$ blocks. A (circular) \sccd($v,k,b$) is \textbf{{tight}} if it is economical and $g_1(v,k,2)$ $\big(g_2(v,k,2)\big)$ is an integer. Any economical \sccd ~is minimum. In a tight \sccd ~every pair is covered exactly once. For some $v,k$ combinations tight \sccd ~can not exist, but economical designs can exist. 

Although not formally stated, Preece et al. proposed two recursive constructions for \tsccd \cite{tsccd}. We state them explicitly.

\begin{proposition}[$v+1$ construction]\cite{tsccd} \label{Prop: circular sccd(v+1,k)}
If a \sccd$(v,k,b)$ with an expansion set exists then a $\sccd(v+1,k, b+\frac{v}{k-1} )$ exists. If the \sccd$(v,k,b)$ is tight then so is the $\sccd(v+1,k, b+\frac{v}{k-1} )$. 
\end{proposition}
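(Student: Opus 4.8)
The plan is to take the given \sccd$(v,k,b)$ with expansion set $\cE = \{U_{i_1}, \dots, U_{i_{v/(k-1)}}\}$ and build the larger design by inserting a new point, call it $\infty$, in a controlled way at each of the $\frac{v}{k-1}$ locations flagged by $\cE$. First I would set up the new ground set $X' = X \cup \{\infty\}$ and describe the surgery at a single expansion location: at the slot between $B_{i_j}$ and $B_{i_j+1}$ (where the unchanged subset is $U_{i_j}$, a $(k-1)$-set), I would splice in a short run of $k-1$ new blocks, each of the form $U_{i_j} \cup \{\infty\}$ with one element of $U_{i_j}$ cycled out and back in — i.e. introduce $\infty$, then successively swap each element of $U_{i_j}$ for the unique element of $X$ not yet paired with $\infty$ on this visit, then finish so as to rejoin the original list seamlessly. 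Doing this at all $\frac{v}{k-1}$ locations adds exactly $\frac{v}{k-1}$ blocks in total only if each visit contributes net one block; more carefully, the standard trick is that each expansion slot absorbs one new block while the $\infty$-point's $v$ new pairs $\{\infty, x\}$ for $x \in X$ get distributed $k-1$ at a time across the $\frac{v}{k-1}$ visits, since $\cE$ partitions $X$.

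The key steps, in order: (1) verify the single-change property is preserved at every new junction — the last original block before the splice, the new blocks among themselves, and the first original block after — which is where the expansion-set hypothesis is used, since $U_{i_j} \subseteq B_{i_j} \cap B_{i_j+1}$ guarantees we can enter and exit the inserted run with single changes; (2) verify circular closure is untouched (the construction is purely local, so if the original was linear the result is linear, and the first/last blocks are unchanged); (3) count blocks: $b' = b + \frac{v}{k-1}$; (4) verify coverage — every pair inside $X$ was already covered, and every pair $\{\infty, x\}$ is covered because $x$ lies in exactly one $U_{i_j}$ and appears with $\infty$ during that visit; (5) for the tightness claim, check that $g_1(v+1,k,2) = g_1(v,k,2) + \frac{v}{k-1}$ by direct computation with $\binom{v+1}{2} = \binom{v}{2} + v$, and observe that since no pair is covered more than once in the inserted blocks (each new block covers only pairs involving $\infty$, and each such pair exactly once), tightness is inherited.

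The main obstacle I expect is step (1): making the local insertion genuinely seamless. One must choose the order in which the $k-1$ elements of $U_{i_j}$ are swapped out so that (a) the block immediately before the run and the first inserted block differ in exactly one element, (b) consecutive inserted blocks differ in exactly one element, and (c) the last inserted block and $B_{i_j+1}$ differ in exactly one element — all while ensuring the element removed to make room for $\infty$, and the elements cycled through, are exactly the members of $X \setminus U_{i_j}$ that need to be paired with $\infty$ on this visit. Getting a clean, uniform description of this $(k-1)$-block gadget that works simultaneously at every expansion location (and checking the visits don't interfere, which follows from $\cE$ being a disjoint union) is the technical heart; the counting and coverage bookkeeping in steps (3)–(5) are then routine.
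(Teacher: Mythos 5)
There is a genuine gap in the central construction. The gadget you describe --- splicing in a run of $k-1$ new blocks at each expansion location, with elements of $U_{i_j}$ ``cycled out'' and swapped for elements of $X$ ``not yet paired with $\infty$'' --- is both inconsistent with your own block count and backwards in its coverage logic. Inserting $k-1$ blocks at each of the $\frac{v}{k-1}$ locations would add $v$ blocks, not $\frac{v}{k-1}$; you notice this tension (``adds exactly $\frac{v}{k-1}$ blocks in total only if each visit contributes net one block'') but never resolve it into a concrete construction. Moreover, the pairs a visit should contribute are $\{\infty,x\}$ for $x\in U_{i_j}$ (as your step (4) correctly asserts), not pairs of $\infty$ with elements outside $U_{i_j}$, which is what your cycling gadget would produce. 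The construction actually used (due to Preece et al.\ and described in the paper immediately after the proposition) is far simpler: at each expansion location insert the \emph{single} block $U_{i_j}\cup\{\infty\}$ between $B_{i_j}$ and $B_{i_j+1}$. Since $U_{i_j}=B_{i_j}\cap B_{i_j+1}$, both junctions are automatically single changes (drop $B_{i_j}\setminus U_{i_j}$ and introduce $\infty$; then drop $\infty$ and introduce $B_{i_j+1}\setminus U_{i_j}$), the introduced element of the new block is $\infty$, so it covers exactly the $k-1$ pairs $\{\infty,x\}$, $x\in U_{i_j}$, and since $\cE$ partitions $X$ every pair involving $\infty$ is covered exactly once. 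The ``technical heart'' you anticipate --- choosing an order for swapping out the elements of $U_{i_j}$ --- is a phantom problem created by the incorrect gadget; with one block per location there is nothing to order.

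Your steps (3)--(5) are essentially right once the gadget is fixed: the identity $g_1(v+1,k,2)=g_1(v,k,2)+\frac{v}{k-1}$ follows from $\binom{v+1}{2}=\binom{v}{2}+v$, and tightness is inherited because each new pair is covered exactly once and the old pairs are undisturbed (with the minor caveat, if the expansion set is outer and contains $U_0$, that the pairs inside $U_0$ are now covered by the new prepended block rather than by $B_1$ --- still exactly once). You should also drop the claim in step (2) that ``the first/last blocks are unchanged,'' which fails precisely in that outer case.
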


For example, consider the \sccd(13,4,25) in Table~\ref{Table: sccd(13,4,2,25)} with the expansion set $\cE = \{ U_1=\{1,2,3\}, U_7=\{5,8,9\}, U_{13}=\{4,6,7\}, U_{22}=\{10,11,12\} \}$. Preece et al. construct the \sccd(13,4,25) given in Table~\ref{Table: sccd(13,4,2,25)} by introducing a new block containing the new element with each unchanged subset in the expansion set of a \sccd(12,4,21).

\begin{table}[H]
\centering
\caption{\sccd(13,4,25)}
\label{Table: sccd(13,4,2,25)}\label{Table: sccd(13,4,25)}
\setlength{\tabcolsep}{.55\tabcolsep}
\begin{tabular}{lllllllllllllll}
$B_1$ & $B_2$ & $B_3$ & $B_4$ & $B_5$ & $B_6$ & $B_7$ & $B_8$ & $B_9$ & $B_{10}$ & $B_{11}$ & $B_{12}$ & $B_{13}$ & $B_{14}$ & $B_{15}$    \\

1$^*$  & \textcolor{blue}{1} & 1  & 1  & 1  & 1  & 1  & \textcolor{blue}{13}$^*$  & 4$^*$  & 4  & 4  & 4  & 4  & \textcolor{blue}{4}  & 4  \\

2$^*$  & \textcolor{blue}{2} & 2  & 2  & 2  & 2  & 9$^*$  & \textcolor{blue}{9}  & 9  & 9  & 10$^*$ & 11$^*$ & 11 & \textcolor{blue}{13}$^*$ & 12$^*$ \\

3$^*$  & \textcolor{blue}{3} & 3  & 6$^*$  & 7$^*$  & 8$^*$  & 8  & \textcolor{blue}{8}  & 8  & 8  & 8  & 8  & 7$^*$  & \textcolor{blue}{7}  & 7  \\

4$^*$  & \textcolor{blue}{13}$^*$ & 5$^*$  & 5  & 5  & 5  & 5  & \textcolor{blue}{5}  & 5  & 6$^*$  & 6  & 6  & 6  & \textcolor{blue}{6}  & 6  \\ \hline

$B_{16}$ & $B_{17}$ & $B_{18}$ & $B_{19}$ & $B_{20}$ & $B_{21}$ & $B_{22}$ & $B_{23}$ & $B_{24}$ & $B_{25}$   \\

3$^*$  & 3                        & 3  & 3  & 3  & 2$^*$  & 2  & \textcolor{blue}{13}$^*$  & 5$^*$  & 1$^*$  &    &    &    &                           &    \\

12 & 12                       & 12 & 10$^*$ & 10 & 10 & 10 & \textcolor{blue}{10} & 10 & 10 &    &    &    &                           &    \\

7  & 7                        & 7  & 7  & 11$^*$ & 11 & 11 & \textcolor{blue}{11} & 11 & 11 &    &    &    &                           &    \\

6  & 8$^*$                        & 9$^*$  & 9  & 9  & 9  & 12$^*$ & \textcolor{blue}{12} & 12 & 12 &    &    &    &                           &   
\end{tabular}
\end{table}

\begin{theorem}[Building \tsccd ~from two \tsccd s] \label{Theorem: tsccd} \label{Theorem: tsccd +tsccd = tsccd} \cite{tsccd}
If there exists a \tsccd$(v,k,b)$ and a tight \sccd$(v',k,b')$ with an outer expansion set where $|X \cap X'|=k-1$, then for $v^*=v+v'-k+1$,  $b^*= b + b' + \frac{(v-k+1)(v'-k+1)}{k-1}$ a \tsccd$(v^*,k,b^*)$ exists where $X^*=X\cup X'$. Furthermore, if the \tsccd$(v,k,b)$ has an expansion set then the \tsccd$(v^*,k,b^*)$ has an expansion set.
\end{theorem}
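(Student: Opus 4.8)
The plan is to construct the combined design $X^* = X \cup X'$ by concatenating the two block lists, using the shared $(k-1)$-set $W := X \cap X'$ as the ``hinge'' that allows the single-change property to be maintained across the seam. First I would exploit the outer expansion set $\cE'$ of the tight \sccd$(v',k,b')$: since $\cE'$ is outer it contains $U'_0$ or $U'_{b'}$ (say $U'_{b'}$, relabelling if necessary), so the last block $B'_{b'}$ contains one of the blocks of $\cE'$. I would cyclically/linearly reindex $\cL'$ and relabel $X'$ so that the member of $\cE'$ sitting inside $B'_{b'}$ is exactly $W$; crucially, because the members of an expansion set are pairwise disjoint and partition $X'$, this forces all $v'-k+1$ elements of $X' \setminus W$ to lie outside $B'_{b'}$, and hence outside $U'_{b'}$. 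Symmetrically I would arrange (this is where the fact that a \tsccd has a ``free'' choice of $U_0$ and $U_b$ as any $(k-1)$-subset of the end block is used) that $B_1$ of the \tsccd$(v,k,b)$ contains $W$; since the \tsccd$(v,k,b)$ on $v$ points need not have a prescribed expansion set here, I only need $W \subseteq B_1$, which I can obtain by a relabelling of $X$.

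Next I would splice: run $\cL'$ first, then insert the blocks of $\cL$ on the point set $X$. The key point is that $B'_{b'}$ and $B_1$ now share the common $(k-1)$-set $W$, and every element of $X \setminus W$ is ``new'' relative to $X'$ while every element of $X' \setminus W$ is ``new'' relative to $X$. I would introduce the $v-k+1$ points of $X \setminus W$ one at a time. The engine for this is Proposition~\ref{Prop: circular sccd(v+1,k)} applied iteratively, or rather its underlying mechanism: starting from a design on $X'$ whose current end block is $W$ together with one extra point, I repeatedly add a new point of $X \setminus W$ by inserting, for each unchanged subset in a suitable expansion set, a block consisting of that unchanged subset plus the new point. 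After all of $X \setminus W$ has been absorbed we have covered: (i) all pairs inside $X'$ (from $\cL'$, which was tight hence covers every such pair exactly once); (ii) all pairs inside $X$ (from the inserted copy of $\cL$, tight); and (iii) every cross pair $\{x, x'\}$ with $x \in X \setminus W$, $x' \in X' \setminus W$ — these are exactly the pairs that must be picked up during the point-by-point absorption, and a bookkeeping argument shows each is covered exactly once when the new point $x$ is introduced against an unchanged subset containing $x'$. The pairs with one endpoint in $W$ are covered within whichever of the two sub-designs contains that endpoint's partner.

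The block count bookkeeping is then routine: $\cL'$ contributes $b'$ blocks; the copy of $\cL$ contributes $b$ blocks; and absorbing the $(v-k+1)$ points of $X\setminus W$, where the $j$-th absorption against a design on a point set of size $(k-1) + (j-1) + \ldots$ adds $\frac{v'-k+1}{k-1}$ blocks each time — summed over the $\frac{v-k+1}{k-1}$ stages this gives $\frac{(v-k+1)(v'-k+1)}{k-1}$ extra blocks, for a total of $b^* = b + b' + \frac{(v-k+1)(v'-k+1)}{k-1}$ on $v^* = v + v' - k + 1$ points. Since every pair of $X^*$ is covered exactly once by the analysis above, $g_1(v^*,k,2)$ is an integer equalling $b^*$ and the design is tight. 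For the final clause, if the \tsccd$(v,k,b)$ carried an expansion set $\cE$, that expansion set's unchanged subsets survive untouched as unchanged subsets in the spliced design (the splicing only modifies blocks near the seam and inserts new blocks; it does not destroy interior unchanged subsets of $\cL$), so $\cE$ remains an expansion set of the \tsccd$(v^*,k,b^*)$, possibly after discarding the one member of $\cE$ that may coincide with $W$ and re-choosing it — but since $\cE$ partitions $X \subseteq X^*$ and $X^* \setminus X = X' \setminus W$ is partitioned by the remaining members of $\cE'$, one in fact obtains a genuine expansion set of $X^*$.

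The main obstacle I expect is the simultaneous alignment at the seam: one must relabel and reindex both lists so that $W$ is simultaneously the shared $(k-1)$-set sitting at the end of $\cL'$ (inside a member of its outer expansion set) and at the start of $\cL$, while also making sure the absorption of $X \setminus W$ actually meets every unchanged subset it needs to in order to cover all $(v-k+1)(v'-k+1)$ cross pairs exactly once; verifying that the expansion-set structure provides enough disjoint unchanged subsets at each of the $\frac{v-k+1}{k-1}$ absorption stages, and that no cross pair is covered twice, is the delicate combinatorial core of the argument. Everything else — the block count, the tightness conclusion from ``every pair covered exactly once'', and the persistence of an expansion set — is straightforward bookkeeping once the seam is set up correctly.
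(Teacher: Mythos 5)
Your construction is essentially the paper's: the paper's own argument (given for the generalization, Theorem~\ref{Theorem: esccd + tsccd = esccd}) concatenates the two lists at a seam equal to the outer member $U'_0$ of $\cE'$ and then, at each remaining expansion location $U'_{i_j}$ of $\cL'$, inserts the $v-k+1$ blocks $U'_{i_j}\cup\{x\}$ for $x\in X\setminus X'$ consecutively, which is exactly the mechanism you describe, and your block count, tightness conclusion, and resulting expansion set $\cE\cup(\cE'\setminus\{U'_0\})$ all agree with the paper. The only wobbles are presentational: iterating Proposition~\ref{Prop: circular sccd(v+1,k)} literally would fail (the intermediate designs need not satisfy $(k-1)\mid v$ nor inherit expansion sets), though the ``underlying mechanism'' you fall back on --- always inserting against the fixed non-seam members of $\cE'$ --- is the correct one; and you should state explicitly that the seam block of the second list now introduces only one element, so the $\binom{k-1}{2}$ pairs inside $W$ are covered only once (in the tight list containing the seam as an ordinary unchanged subset) rather than twice.
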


For example, Table~\ref{tsccd(6,3) D} shows two tight \sccd$(6,3,7)$. The \tsccd(10,3,22), constructed by Theorem~\ref{Theorem: tsccd +tsccd = tsccd} is shown in Table~\ref{tsccd(10,3)} with the blocks of $\cL$, $\cL'$, and $B''_{i_j,x}$ shown in  black, red, and blue respectively.

\begin{table}[H]
\setlength{\tabcolsep}{.55\tabcolsep}
\centering
\caption{Two \tsccd(6,3,7) with outer expansion sets}
\label{tsccd(6,3) D} \label{tsccd(6,3,2,7) D}
\label{tsccd(6,3) D'} \label{tsccd(6,3,2,7) D'}
\begin{tabular}{p{15pt} p{10pt} lll p{10pt} l       lll p{15pt} p{10pt} lll p{10pt} l}
$\hspace{0.6em}$B$_1$ & B$_2$ & B$_3$ & B$_4$ & B$_5$ & B$_6$ & B$_7$ 
&&&& $\hspace{0.6em}$B'$_1$ & B'$_2$ & B'$_3$ & B'$_4$ & B'$_5$ & B'$_6$ & B'$_7$ \\
$\hspace{0.6em}1^*$ & 1 & 1 & 1 & $3^*$ & 3 & $2^*$ 
&&&&$\hspace{0.6em}\textcolor{red}{2^*}$ & \textcolor{red}{2} & $\textcolor{red}{9^*}$ & $\textcolor{red}{10^*}$ & \textcolor{red}{10} & \textcolor{red}{10} & \textcolor{red}{10} \\
$\hspace{0.6em}2^*$ & 2 & $5^*$ & $6^*$ & 6 & 6 & 6  
&&&& $\hspace{0.6em}\textcolor{red}{7^*}$ & \textcolor{red}{7} & \textcolor{red}{7} & \textcolor{red}{7} & $ \textcolor{red}{5^*}$ & \textcolor{red}{5} & $\textcolor{red}{2^*}$\\
$_{\land}3^*$ & $4^*_{\hspace{.3em}\land}$ & 4         & 4         & 4         & $5^*_{\hspace{.3em}\land}$ & 5  
&&&& $\textcolor{red}{_{\land}5^*}$ & $\textcolor{red}{8^*_{\hspace{.3em}\land} }$ & \textcolor{red}{8}         & \textcolor{red}{8}         & \textcolor{red}{8}         & $\textcolor{red}{9^*_{\hspace{.3em}\land} }$ & \textcolor{red}{9} 
\end{tabular}
\end{table}

\begin{table}[H]
\centering
\setlength{\tabcolsep}{.55\tabcolsep}
\caption{A \tsccd(10,3,22) with outer expansion set}
\label{tsccd(10,3)} \label{tsccd(10,3,2,22)}
\begin{tabular}{p{13pt} p{8pt} p{8pt}p{8pt}p{8pt} p{8pt} p{8pt}p{8pt}    p{8pt} p{8pt} p{8pt}p{8pt}p{8pt} p{8pt}p{10pt}p{10pt}p{10pt} p{10pt} p{10pt}p{10pt}p{10pt}p{10pt}p{10pt} }
$\hspace{0.6em}$\small{B}\tiny{$_1$} & \small{B}\tiny{$_2$} & \small{B}\tiny{$_3$} & \small{B}\tiny{$_4$} & \small{B}\tiny{$_5$} & \small{B}\tiny{$_6$} & \small{B}\tiny{$_7$} & \small{B}\tiny{$_8$} & \small{B}\tiny{$_9$} & \small{B}\tiny{$_{10}$} & \small{B}\tiny{$_{11}$} & \small{B}\tiny{$_{12}$} & \small{B}\tiny{$_{13}$} & \small{B}\tiny{$_{14}$} & \small{B}\tiny{$_{15}$} & \small{B}\tiny{$_{16}$} & \small{B}\tiny{$_{17}$} & \small{B}\tiny{$_{18}$} & \small{B}\tiny{$_{19}$} & \small{B}\tiny{$_{20}$} & \small{B}\tiny{$_{21}$} & \small{B}\tiny{$_{22}$} \\
$\hspace{0.6em}1^*$ & 1 & 1     & 1     & $3^*$ & 3 & $2^*$ & $\textcolor{red}{2}$ & \textcolor{red}{2} & $\textcolor{blue}{1^*}$ & $\textcolor{blue}{3^*}$ & $\textcolor{blue}{4^*}$ & $\textcolor{blue}{6^*}$ &$\textcolor{red}{9^*}$ & $\textcolor{red}{10^*}$ & \textcolor{red}{10}    & \textcolor{red}{10} & \textcolor{blue}{10} & \textcolor{blue}{10} & \textcolor{blue}{10} & \textcolor{blue}{10} & \textcolor{red}{10} \\

$\hspace{0.6em}2^*$ & 2 & $5^*$ & $6^*$ & 6     & 6 & 6     & $\textcolor{red}{7^*}$   & \textcolor{red}{7} & \textcolor{blue}{7} & \textcolor{blue}{7} & \textcolor{blue}{7} & \textcolor{blue}{7} & \textcolor{red}{7}     & \textcolor{red}{7}      & $\textcolor{red}{5^*}$ & \textcolor{red}{5}  & $\textcolor{blue}{1^*}$ & $\textcolor{blue}{3^*}$ & $\textcolor{blue}{4^*}$ & $\textcolor{blue}{6^*}$ & $\textcolor{red}{2^*}$\\

$_{\land}3^*$ & $4^*_{\hspace{.3em}\land}$ & 4 & 4 & 4 & $5^*_{\hspace{.3em}\land}$ & 5   & $\textcolor{red}{5}$ & $\textcolor{red}{8^*_{\hspace{.3em}\land} }$ & \textcolor{blue}{8} & \textcolor{blue}{8} & \textcolor{blue}{8} & \textcolor{blue}{8} & \textcolor{red}{8} & \textcolor{red}{8} & \textcolor{red}{8} & $\textcolor{red}{9^*_{\hspace{.3em}\land} }$ & \textcolor{blue}{9} & \textcolor{blue}{9} & \textcolor{blue}{9} & \textcolor{blue}{9} & \textcolor{red}{9}
\end{tabular}
\end{table}

As of 2001, the following \sccd ~and \scccd ~were known to exist. 
\begin{theorem}\label{Theorem: Existance of sccd (v,2,2,b),(v,3,2,b),(v,4,2,b),(v,5,2,b)} \label{Theorem: Previously known groups of sccd}
    \begin{enumerate}
        \item There exists a tight \sccd$(v,2,b)$ for all $v$
        \cite{Nelder}.
        \item An economical \sccd$(v,3,b)$ exists for all $v \geq 6$
        ; tight if and only if $v\equiv 2,3\pmod{4}$\cite{sccd}.
        \item An economical \sccd$(v,4,b)$ exists for all $v \geq 12$
        ; tight if and only if $v\equiv 0,1\pmod{3}$ \cite{Zhang_new_bounds,tsccd}.  \label{Theorem: Existance tsccd(v,4,2)}
        \item A \tsccd$(20,5,46)$ exists \cite{Phillips_20}.
        \item A tight \scccd$(v,2,b)$ exists for all $v \geq 3$
        \cite{circular}.
        \item An economical \sccd($v,3,b$) exists for all $v \geq 4$, $b=\ceiling{\frac{v(v-1)}{4}}$; tight if and only if $v \equiv 0,1 \pmod{4}$ \cite{circular}.
        \item A \tscccd(9,4,12) and \tscccd(10,4,15) exist \cite{circular}.
        \item An economical \scccd$(v,k,v-1)$ exists for $k+1 \leq v \leq 2k-2$ \cite{circular}.
        \item A \tscccd$(2k-2,k,2k-3)$ exists for $k\geq 3$\cite{circular}.
        \item A \tscccd$(2k-1,k,2k-1)$ exists for $k\geq 2$ \cite{circular}.
        \item An \escccd$(2k,k,2k+2)$ exists for $k\geq 2$ \cite{circular}.
    \end{enumerate}
\end{theorem}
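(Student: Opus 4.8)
Theorem~\ref{Theorem: Previously known groups of sccd} assembles eleven existence statements, so ``proving'' it means recovering the constructions behind them; I would group them by method. The two $k=2$ clauses (1 and~5) are purely graph-theoretic: since $g_1(v,2,2)=g_2(v,2,2)=\binom{v}{2}$, a tight linear (resp.\ circular) \sccd$(v,2,b)$ is exactly an ordering of all $\binom{v}{2}$ edges of $K_v$ in which consecutive edges, and in the circular case also the first and last, share a vertex; since two distinct $2$-sets meet in one point iff they are adjacent in the line graph $L(K_v)=T(v)$, this is a Hamiltonian path (resp.\ cycle) in $T(v)$. So clauses~1 and~5 follow from the classical fact that $T(v)$ has a Hamiltonian cycle for every $v\ge 3$ (a Hamiltonian path being obtained by deleting one edge, and small $v$ trivial), tightness being automatic because each block covers precisely its own pair.

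For the small-$v$ circular families with general $k$ (clauses~8,~9,~10) I would use rotational designs together with a universal point. Developing the interval block $\{0,1,\dots,k-1\}$ over $\mathbb{Z}_{2k-1}$ gives $2k-1$ blocks with $|B_i\cap B_{i+1}|=k-1$; the element introduced at step~$i$ is $i+k-1$, and its pairs with the rest of $B_i$ run through the differences $1,\dots,k-1$, so over a full period every pair is covered exactly once --- the \tscccd$(2k-1,k,2k-1)$ of clause~10 (the $c=1$ case of the paper's difference-family theorem). Adjoining one point $\infty$ to every block of the block-size-$(k-1)$ development on $\mathbb{Z}_{2k-3}$ keeps each step a single change and covers every pair $\{\infty,a\}$ exactly once, at the block where $a$ is introduced, giving clause~9; the same construction over the range $k+1\le v\le 2k-2$, plus a short counting argument that $v-1$ blocks are unavoidable near the lower end, handles clause~8. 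The general-$v$ results for $k=3,4$ (clauses~2,~3,~6) and the sporadic \tsccd$(20,5,46)$ and \tscccd$(9,4,12)$, \tscccd$(10,4,15)$ (clauses~4,~7) I would obtain by feeding finitely many hand- or computer-constructed base designs into the recursive tools already recalled --- the $v+1$ construction of Proposition~\ref{Prop: circular sccd(v+1,k)} and the gluing of two tight designs along a shared unchanged subset of Theorem~\ref{Theorem: tsccd} --- tracking the congruences (e.g.\ $v\equiv 0,1\pmod 3$ at $k=4$) that govern integrality of $g_1,g_2$.

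The clause deserving genuine effort is~11. First observe $g_2(2k,k,2)=\binom{2k}{2}/(k-1)=2k+1+\tfrac{1}{k-1}$, so $\ceiling{g_2(2k,k,2)}=2k+2$ for all $k\ge 2$ (an integer only at $k=2$); hence a circular \sccd$(2k,k,2k+2)$ is automatically minimum, over-covers exactly $k-2$ pairs, and at $k=2$ must be tight. To construct one, start from the interval-developed \tscccd$(2k-1,k,2k-1)$ on $\mathbb{Z}_{2k-1}$ of clause~10, adjoin a new point $\infty$, and splice in three extra blocks: for three indices $j_1,j_2,j_3$, insert immediately after $B_{j_\ell}$ the block $B_{j_\ell}\setminus\{x_{j_\ell}\}\cup\{\infty\}$, where $x_{j_\ell}$ is the element that leaves $B_{j_\ell}$ on passing to $B_{j_\ell+1}$. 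Choosing $x_{j_\ell}$ this way keeps consecutive blocks a single change apart, leaves every original introduction untouched, and makes the inserted block introduce $\infty$ paired exactly with the unchanged subset $U_{j_\ell}=B_{j_\ell}\cap B_{j_\ell+1}$. Since every $U_j$ is an interval of length $k-1$ and $3(k-1)\ge 2k-1$, one can pick $j_1,j_2,j_3$ with $U_{j_1}\cup U_{j_2}\cup U_{j_3}=\mathbb{Z}_{2k-1}$ (say $j_1=0$, $j_2=k-1$, and $j_3$ chosen so $0\in U_{j_3}$), so every pair $\{\infty,a\}$ is covered; the only repeats are $\{\infty,a\}$ for $a$ in the triple overlap, of which there are exactly $3(k-1)-(2k-1)=k-2$. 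With $\ceiling{g_2(2k,k,2)}=2k+2$ this is an \escccd$(2k,k,2k+2)$, proving clause~11.

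The obstacle here is conceptual rather than computational. The block-count bound leaves only three ``spare'' blocks beyond the $2k-1$-block tight design on $2k-1$ points in which to introduce the extra point and cover all $2k-1$ of its new pairs, and the obvious attempts fail: a single short $\infty$-run pairs $\infty$ with at most $k+1<2k-1$ new partners, and forcing the arithmetic any other way tends to break a single change. The needed idea is precisely that a one-block detour after $B_j$ pairs $\infty$ with the unchanged subset $U_j$, so the whole task reduces to covering the ground set by three unchanged subsets --- which the interval design does, with overlap equal to the forced surplus $k-2$. Once this is seen, the remaining checks (single changes at the splices, the circular wrap-around, full coverage) are routine bookkeeping.
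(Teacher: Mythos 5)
This theorem is a background survey: the paper supplies no proof of any of its eleven clauses, each being quoted from the cited literature (Nelder; Wallis et al.; Preece et al.; Phillips; McSorley), so there is no proof of record to compare against line by line. Judged on their own merits, most of your reconstructions are correct and match the known arguments. Clauses 1 and 5 via Hamiltonian paths and cycles in the triangular graph $T(v)=L(K_v)$ is exactly right, since $g_1(v,2,2)=g_2(v,2,2)=\binom{v}{2}$ and each block covers only its own pair. Your clause 10 is precisely the $c=1$ case of the paper's Theorem~\ref{Theorem: family construction of circular scccd with fixed k}, and your clause 9 (adjoin $\infty$ to the development of a $(k-1)$-interval over $\mathbb{Z}_{2k-3}$) checks out: each block covers $k-2$ interval differences plus one $\infty$-pair, giving $(2k-3)(k-1)=\binom{2k-2}{2}$ pairs with no repeats. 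The clause 11 argument is the most valuable part of the proposal and is a genuine, correct proof: the computation $g_2(2k,k,2)=2k+1+\frac{1}{k-1}$, the reduction to covering $\mathbb{Z}_{2k-1}$ by three unchanged subsets (intervals of length $k-1$, possible since $3(k-1)\geq 2k-1$), the splice $B_{j}\to U_{j}\cup\{\infty\}\to B_{j+1}$ preserving single change and all original coverages, and the excess count $3(k-1)-(2k-1)=k-2$ are all correct. Deferring clauses 2, 3, 4, 6, 7 to finitely many base designs plus the stated recursions is legitimately what the sources do.

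The one clause where your sketch does not go through as written is 8. For $k+1\leq v\leq 2k-2$ the block size exceeds half the point set, so ``the same construction'' (developing an interval, with or without $\infty$) only produces the endpoint case $v=2k-2$; McSorley's designs in this range are built differently, exploiting elements that lie in every block and are never introduced. Moreover, for $v\leq 2k-4$ one has $\ceiling{g_2(v,k,2)}<v-1$ (e.g.\ $k=5$, $v=6$ gives $\ceiling{15/4}=4<5$), so these designs are not economical under the paper's own definition of attaining $\ceiling{g_2}$ blocks; the lower bound you would need is McSorley's separate bound $b\geq v-1$ for circular designs in this range, which is the counting argument you allude to but do not supply. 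This clause therefore needs both a different construction and a different optimality criterion than the ones you propose.
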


In this paper, we extend Theorem~\ref{Theorem: tsccd +tsccd = tsccd} to include economical \sccd ~and circular \sccd. We use this to almost complete the spectrum of \scccd ~with $k=4$ and three residue classes of \sccd ~and \scccd ~with $k=5$. There are still some small open cases when $k=4$ and $k=5$. Furthermore, we provide an infinite number of \scccd ~for each $k$ using difference methods.

\section{Results}

We first will generalize Theorem~\ref{Theorem: tsccd} to construct economical \sccd. We say that a block $B_i$ in a \sccd ~is \textbf{tight} if the pairs it covers are not covered in any other block of the \sccd. We say the \textbf{excess}, $e$, of a \sccd ~is the number of pairs covered repeatedly. 
\begin{equation*}
e=\begin{cases}
          (k-1)b+\binom{k-1}{2}-\binom{v}{2} \quad & \text{in linear \sccd}\\
          (k-1)b-\binom{v}{2} \quad & \text{in Circular \sccd}\\
     \end{cases}
\end{equation*}

We say the \textbf{excess}, $e_i$, of a block $B_i$ in a \sccd ~is the number of pairs $B_i$ covers that were covered in any block $B_j$, $j<i$. We say the \textbf{excess}, $e_i$, of a block $B_i$ with respect to an initial block $B_0$ in a \scccd ~is the number of pairs $B_i$ covers that were covered in any block $B_j$, $0\leq j < i$.

\begin{lemma}\label{Lemma: The Excess of a sccd is the sum of excesses of blocks}
The excess, $e$, of a (circular) \sccd ~is the sum of the excesses of blocks.
\end{lemma}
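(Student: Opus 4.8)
## Proof Proposal

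The plan is to show that the two notions of "excess" are counting the same multiset of repeated pair-coverings, once globally and once block-by-block. First I would set up the counting framework precisely. For each pair $\{x,y\} \subseteq X$, let $c(\{x,y\})$ denote the number of blocks $B_i$ in which $\{x,y\}$ is \emph{covered} (in the technical sense of the paper: $\{x,y\} \subseteq B_i$ and at least one of $x,y$ was introduced in $B_i$). Since every pair must be covered at least once, $c(\{x,y\}) \geq 1$, and the total repeated coverage is $\sum_{\{x,y\}} \big(c(\{x,y\}) - 1\big)$. The first step is to verify that this quantity equals the global excess $e$ as defined: the total number of (pair, block) covering incidences is $\sum_i (\text{number of pairs covered in } B_i)$, and one checks that in the circular case each block covers exactly $k-1$ pairs (the introduced element paired with the $k-1$ unchanged elements), giving $(k-1)b$ incidences total, while in the linear case the first block $B_1$ covers all $\binom{k}{2}$ of its pairs and each subsequent block covers $k-1$, giving $(k-1)(b-1) + \binom{k}{2} = (k-1)b + \binom{k-1}{2}$ incidences. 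Subtracting $\binom{v}{2}$ (one mandatory covering per pair) from the incidence count yields exactly the case formula for $e$, and also equals $\sum_{\{x,y\}} (c(\{x,y\}) - 1)$.

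Next I would reinterpret $\sum_i e_i$ the same way. By definition $e_i$ counts the pairs covered in $B_i$ that were \emph{already} covered in some earlier block $B_j$ (with $j<i$ in the linear case, or $0 \leq j < i$ relative to the chosen initial block $B_0$ in the circular case). The key combinatorial observation is that for a fixed pair $\{x,y\}$, among the $c(\{x,y\})$ blocks that cover it, exactly one — the first, in the linear or rooted-circular order — contributes $0$ to its respective $e_i$, and each of the remaining $c(\{x,y\}) - 1$ covering blocks contributes exactly $1$ to its $e_i$ on account of this pair. Summing over all pairs, the contribution of $\{x,y\}$ to $\sum_i e_i$ is precisely $c(\{x,y\}) - 1$, so $\sum_i e_i = \sum_{\{x,y\}}\big(c(\{x,y\}) - 1\big) = e$.

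The one point requiring care — and the main obstacle — is the circular case, because $e$ as stated does not reference an initial block $B_0$, whereas the block excesses $e_i$ for a circular \sccd \emph{are} defined relative to a choice of $B_0$. I would argue that the sum $\sum_i e_i$ is in fact independent of this choice: relabelling the blocks cyclically permutes which covering block of each pair is designated "first," but it never changes the \emph{number} of non-first covering blocks, which is always $c(\{x,y\}) - 1$. Hence the per-pair contribution, and therefore the total, is invariant. (One should also note that the sum $\sum_i e_i$ implicitly includes, in the linear case, the term $e_1 = 0$ since no pair of $B_1$ can have been covered earlier, and that this is consistent with the $\binom{k-1}{2}$ versus $\binom{k}{2}$ bookkeeping above.) With that invariance established, the two computations of $\sum_{\{x,y\}}(c(\{x,y\})-1)$ coincide and the lemma follows. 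I expect the whole argument to be short; the only genuinely subtle sentence is the well-definedness of $\sum_i e_i$ in the circular setting.
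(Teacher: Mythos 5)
Your proposal is correct and follows essentially the same route as the paper: both arguments count coverage incidences block by block ($k-1$ per block, plus the extra $\binom{k-1}{2}$ from the first block in the linear case) and observe that for each pair exactly one covering block is ``first'' while the remaining $c(\{x,y\})-1$ each contribute one unit to some $e_i$. Your write-up is more careful than the paper's one-line proof, in particular in making explicit that $\sum_i e_i$ in the circular case does not depend on the choice of initial block $B_0$, a point the paper dismisses with ``the proof for the circular \sccd~is similar with respect to an initial block.''
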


\begin{proof}
Consider the \sccd$(v,k,b)$, $(X,\cL)$. Each block $B_i$ will cover $k-1$ pairs where $e_i$ of these have been previously covered. Summing through these blocks we have that $\sum_{i=1}^b e_i = e$. The proof for the circular \sccd ~is similar with respect to an initial block.
\end{proof}

\begin{lemma}\label{Lemma: excess}
A (circular) \sccd ~is economical if and only if $e\leq k-2$. A \sccd ~is tight if and only if $e=0$.
\end{lemma}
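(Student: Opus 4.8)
The plan is to prove the claimed equivalences directly from the definitions of economical, tight, and excess, using the counting identities already recorded in the excerpt. The key quantities are the lower bounds $g_1(v,k,2)=\frac{\binom{v}{2}-\binom{k}{2}}{k-1}+1$ (linear) and $g_2(v,k,2)=\frac{\binom{v}{2}}{k-1}$ (circular), the fact that a design is economical if it has $\ceiling{g_i}$ blocks and tight if it is economical and $g_i$ is an integer, and the two displayed formulas for the excess $e$. The heart of the argument is to observe that in each case the excess $e$ satisfies $e=(k-1)(b-g_i)$ for the appropriate $i$: expanding $(k-1)b+\binom{k-1}{2}-\binom{v}{2}$ and comparing with $(k-1)g_1(v,k,2)=\binom{v}{2}-\binom{k}{2}+(k-1)$ gives $e=(k-1)(b-g_1)$ after checking $\binom{k-1}{2}-\binom{k}{2}=-(k-1)$; similarly in the circular case $e=(k-1)b-\binom{v}{2}=(k-1)(b-g_2)$ immediately.

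Given that identity, I would argue as follows. Since $b$ is an integer and $b\geq g_i$ by Lemma~\ref{Lemma: Number of blocks sccd(v,k,2)}, we have $b\geq\ceiling{g_i}$, so the design is economical precisely when $b=\ceiling{g_i}$, i.e. when $g_i\leq b<g_i+1$, i.e. when $0\leq b-g_i<1$. Multiplying by $k-1$, this is equivalent to $0\leq e<k-1$, that is $e\leq k-2$ since $e$ is a nonnegative integer (it counts repeated pair coverings, and by Lemma~\ref{Lemma: The Excess of a sccd is the sum of excesses of blocks} it is a sum of the nonnegative block excesses $e_i$). Conversely if $e\leq k-2$ then $b-g_i=e/(k-1)<1$ and $b-g_i\geq 0$, so $b$ is the least integer $\geq g_i$, namely $\ceiling{g_i}$, hence the design is economical. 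For the tightness statement, by definition a tight design is an economical one for which $g_i$ is an integer; but $e=(k-1)(b-g_i)$ with $b$ an integer shows $g_i$ is an integer exactly when $(k-1)\mid e$, and combined with $0\leq e\leq k-2$ this forces $e=0$. Conversely $e=0$ gives $b=g_i\in\mathbb{Z}$ and economical, hence tight. I should also note the two stray remarks already in the excerpt — ``Any economical \sccd~is minimum'' and ``In a tight \sccd~every pair is covered exactly once'' — are consistent with this: $e=0$ literally says no pair is covered twice, and each of the $\binom{v}{2}$ pairs is covered at least once by definition.

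There is essentially no hard computational obstacle here; the one place to be careful is the linear case, where I must correctly handle the contribution of the first block $B_1$. In a linear \sccd~all $k$ elements of $B_1$ are introduced, so $B_1$ covers $\binom{k}{2}$ pairs rather than $k-1$, which is exactly why the formula for $e$ in the linear case carries the $+\binom{k-1}{2}-\binom{v}{2}$ correction and why $g_1$ has the $-\binom{k}{2}$ and $+1$ terms. I would make this explicit: counting pair-coverings with multiplicity gives $\binom{k}{2}+(b-1)(k-1)$ total coverings across all blocks, of which $\binom{v}{2}$ are ``first-time'' coverings and the remaining $e$ are repeats, so $e=\binom{k}{2}+(b-1)(k-1)-\binom{v}{2}=(k-1)b+\binom{k-1}{2}-\binom{v}{2}$, reconfirming the displayed definition. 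The circular case is cleaner because exactly one element is introduced in every block, including $B_1$ relative to $B_b$. With that identity in hand, the proof is just the elementary real-analysis observation about when an integer equals a ceiling, applied to $b$ versus $g_i$, together with the divisibility remark for tightness.
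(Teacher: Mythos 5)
Your proof is correct and follows essentially the same route as the paper: both reduce the claim to comparing $b$ with $\ceiling{g_i}$ via the formula for $e$, your identity $e=(k-1)(b-g_i)$ being just a cleaner packaging of the paper's ceiling-bound computation. If anything, your treatment of the tight case is more complete than the paper's, which only argues the forward direction (tight $\Rightarrow$ every pair covered once $\Rightarrow$ $e=0$), whereas you also derive $e=0$ from the definition of tight via the divisibility observation $(k-1)\mid e$ and prove the converse.
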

\begin{proof}
Suppose that we have an economical \sccd($v,k,b$). 
From the definition of $e$ we find
\begin{align*}
    e & = (k-1)b+\binom{k-1}{2}-\binom{v}{2}\\ 
      & = \ceiling{\frac{\binom{v}{2}-\binom{k}{2}}{k-1}+1}(k-1)-\binom{v}{2}+\binom{k-1}{2} \\
      & \text{Since } \frac{n}{m} \leq \ceiling{\frac{n}{m}}<\frac{n}{m}+1 \text{ we have that}\\
    e & < \binom{v}{2}-\binom{k}{2}+2(k-1) - \binom{v}{2} + \binom{k-1}{2}\\
    e & < k-1
\end{align*}

Conversely, suppose $0 \leq e <k-1$, since $e \geq 0$ we have $\frac{\binom{v}{2}-\binom{k}{2}}{k-1}+1 \leq b$ and $b=\frac{\binom{v}{2}-\binom{k}{2}}{k-1}+\frac{e}{k-1}+1 <\frac{\binom{v}{2}-\binom{k}{2}}{k-1}+1+1$. Thus since $b\in \bN$, $b=\ceiling{\frac{\binom{v}{2}-\binom{k}{2}}{k-1}+1}$.

A \sccd$(v,k,b)$ is tight if every pair is covered exactly once, therefore no pair can be previously covered and $e=0$. 

The proof for circular \sccd ~is similar.
\end{proof}

Noting that the excess of a tight block is zero proves the following. 

\begin{lemma}\label{Lemma: esccd with tight blocks is esccd}
Let $(X,\mathcal{L}=(B_1,...,B_b))$ be a $\sccd(v,k,b)$ and let  $(X',\mathcal{L}'=(B_1,...,B_b,B'_{b+1},...,B'_{b'}))$ be a $\sccd(v',k,b')$ with $X \subseteq X'$. If $B'_i$ is tight $\forall i, b+1 \leq i \leq b'$, then $(X',\mathcal{L}')$ has the same excess as $(X,\cL)$. 
\end{lemma}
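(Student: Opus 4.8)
The plan is to reduce everything to the definition of excess of a block and to the previously-established fact (Lemma~\ref{Lemma: The Excess of a sccd is the sum of excesses of blocks}) that the excess of a design equals the sum of the excesses of its blocks. Since $(X,\cL)$ and $(X',\cL')$ share the same first $b$ blocks, the natural strategy is to compare the two decompositions block by block: the blocks $B_1,\dots,B_b$ contribute the same amount in both designs, and the appended blocks $B'_{b+1},\dots,B'_{b'}$ contribute nothing because each is assumed tight. The only subtlety is making sure ``the same amount'' is actually true — a priori, the excess $e_i$ of a block depends on the whole history of blocks before it, and in $(X',\cL')$ we are working over a larger ground set $X'$, so I must check that the history relevant to each $B_i$ with $i\le b$ is unchanged.

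First I would fix an initial block (in the circular case) and invoke Lemma~\ref{Lemma: The Excess of a sccd is the sum of excesses of blocks} for both designs, writing $e=\sum_{i=1}^{b} e_i$ for $(X,\cL)$ and $e'=\sum_{i=1}^{b'} e'_i$ for $(X',\cL')$, where $e_i$ and $e'_i$ denote the excesses of $B_i$ in the respective designs. Next I would observe that for $1\le i\le b$ the blocks $B_1,\dots,B_{i-1}$ are literally the same sequence in both designs; hence the set of pairs covered before $B_i$ is the same, and therefore $e_i=e'_i$ for all $i\le b$. This is the one genuine point to nail down, and it is where the hypothesis $X\subseteq X'$ is used implicitly: no new element of $X'\setminus X$ appears in any of $B_1,\dots,B_b$, so introducing elements and the pairs they cover are computed identically. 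Then I would note that for $b+1\le i\le b'$ the hypothesis that $B'_i$ is tight means, by the remark immediately preceding this lemma (the excess of a tight block is zero), that $e'_i=0$.

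Putting these together gives
\begin{equation*}
e'=\sum_{i=1}^{b'} e'_i=\sum_{i=1}^{b} e'_i+\sum_{i=b+1}^{b'} e'_i=\sum_{i=1}^{b} e_i+0=e,
\end{equation*}
which is exactly the claim. I expect the main (and only) obstacle to be the careful justification that $e_i=e'_i$ for $i\le b$, i.e.\ that enlarging the ground set and appending blocks at the end does not retroactively change how many of $B_i$'s pairs count as ``previously covered''; once the definitions are unwound this is immediate, but it should be stated explicitly rather than glossed over. In the circular case one should also remark that the choice of initial block can be taken to be the same $B_0$ in both designs, so that the notion of ``excess with respect to an initial block'' lines up, and the same block-by-block comparison goes through verbatim.
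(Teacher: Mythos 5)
Your proposal is correct and follows essentially the same route as the paper, which disposes of this lemma with the single remark that a tight block has excess zero, implicitly combined with Lemma~\ref{Lemma: The Excess of a sccd is the sum of excesses of blocks} exactly as you do. Your explicit check that $e_i=e'_i$ for $i\le b$ (because the two designs share the same prefix of blocks, so the ``previously covered'' history is identical) is the only detail the paper leaves unstated, and you handle it correctly.
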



For example, consider the \esccd$(8,3,14)$ of Table~\ref{Table: esccd(8,3,2,14) built with tight blocks} whose first three blocks, given in red, are an economic \sccd($4,3,3$) with excess 1. Blocks $4-14$ are tight so the \sccd($8,3,14$) has excess 1.

\begin{table}[H]
\centering
\caption{\esccd(8,3,14)}
\label{Table: esccd(8,3,2,14) built with tight blocks}\label{Table: esccd(8,3,14) built with tight blocks}

\begin{tabular}{llllllllllllll}
\small{B}\tiny{$_1$} & \small{B}\tiny{$_2$} & \small{B}\tiny{$_3$} & \small{B}\tiny{$_4$} & \small{B}\tiny{$_5$} & \small{B}\tiny{$_6$} & \small{B}\tiny{$_7$} & \small{B}\tiny{$_8$} & \small{B}\tiny{$_9$} & \small{B}\tiny{$_{10}$} & \small{B}\tiny{$_{11}$} & \small{B}\tiny{$_{12}$} & \small{B}\tiny{$_{13}$} & \small{B}\tiny{$_{14}$} \\
\hspace{6pt}\textcolor{red}{$1^*$} & \textcolor{red}{$4^*$} & \textcolor{red}{4}     & 4     & $7^*$ & 7     & 7     & 7     & 7     & 7     & $6^*$ & 6     & 6     &6  \\
\hspace{6pt}\textcolor{red}{$2^*$} & \textcolor{red}{2}     & \textcolor{red}{$1^*$}     & $8^*$ & 8     & 8     & 8     & 8     & 8     & $4^*$ & 4     & $1^*$ & $2^*$ &$3^*$ \\
$_{\land}$\textcolor{red}{$3^*$} & \textcolor{red}{3}     & \textcolor{red}{3}$_{\land}$ & 3     & 3     & $6^*_{\land}$ & $1^*$ & $2^*$ & $5^*$ & 5     & 5     & 5     & 5     &5$_{\land}$
\end{tabular}
\end{table}

We note that we can reverse the design Lemma~\ref{Lemma: esccd with tight blocks is esccd} produced and generate a design where the tight blocks are at the start of the design. 

We may now generalize Theorem~\ref{Theorem: tsccd +tsccd = tsccd}.

\begin{theorem}[Constructing \sccd ~with excess using two \sccd]\label{Theorem: esccd + tsccd = esccd}
If there exists a \sccd(v,k,b) with excess $e$ and a \sccd$(v',k,b')$ with an outer expansion set and excess $e'$, then a \sccd$(v^*,k,b^*)$, $v*=v+v'-k+1$, $b^* = b +b' + \frac{(v-k+1)(v'-k+1)}{k-1}$, exists with excess $e^*=e+e'$. Furthermore, if \sccd$(v,k,b)$ has an expansion set then \sccd$(v^*,k,b^*)$ has an expansion set.
\end{theorem}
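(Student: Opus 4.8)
The plan is to generalize the proof of Theorem~\ref{Theorem: tsccd +tsccd = tsccd} almost verbatim, keeping careful track of the excess. First I would recall the construction: we have $(X,\cL)$ a \sccd$(v,k,b)$ with excess $e$, and $(X',\cL')$ a \sccd$(v',k,b')$ with an outer expansion set $\cE' = \{U'_{i_1},\dots,U'_{i_{v'/(k-1)}}\}$ and excess $e'$, sharing a common $(k-1)$-set $X \cap X'$ which (since the expansion set is outer) may be taken to be $U'_0$ or $U'_{b'}$. By Lemma~\ref{Lemma: esccd with tight blocks is esccd} and the remark after it, we may assume without loss of generality that the shared set sits at one end, say the overlap identifies the last block $B_b$ of $\cL$ (or a $(k-1)$-subset thereof) with the outer unchanged subset of $\cL'$. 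The new list $\cL^*$ is formed by writing down $\cL$, then for each unchanged subset $U'_{i_j}$ in the expansion set $\cE'$ inserting a ``bridge'' block that adjoins one new element of $X \setminus X'$... here I must be careful about the direction of the construction; following Preece et al., the design on the larger ground set $X^* = X \cup X'$ is built by taking $\cL$ and then, for each of the $(v-k+1)/(k-1)$ elements... no: the roles are that $\cL$ supplies the ``base'' and the expansion set of $\cL'$ supplies $v'/(k-1)$ anchor spots into which the $(v-k+1)$ new elements of $X \setminus X'$ are threaded, each contributing a run of blocks. Concretely, each new element $x \in X\setminus X'$ gets inserted into each of the $v'/(k-1)$ unchanged subsets of $\cE'$, spawning new blocks $B''_{i_j,x}$, exactly as in Table~\ref{tsccd(10,3)}; the counting $b^* = b + b' + \frac{(v-k+1)(v'-k+1)}{k-1}$ and $v^* = v+v'-k+1$ is then the same arithmetic as in Theorem~\ref{Theorem: tsccd +tsccd = tsccd} and I would just cite that verification rather than redo it.

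The genuinely new content is the excess bookkeeping, and this is where I would spend most of the effort. I would use Lemma~\ref{Lemma: The Excess of a sccd is the sum of excesses of blocks}: it suffices to show $e^* = \sum_i e^*_i$ splits as $e + e'$. The blocks of $\cL^*$ come in three types: (i) the original blocks of $\cL$, (ii) the original blocks of $\cL'$, and (iii) the new bridge blocks $B''_{i_j,x}$. For type (i), each such block covers exactly the pairs it covered inside $\cL$, since no earlier block in $\cL^*$ involves any element outside $X$ — so their total contribution to $e^*$ is exactly $e$. For type (iii), I claim every bridge block is \emph{tight} in $\cL^*$: the block $B''_{i_j,x}$ has the form $U'_{i_j} \cup \{x\}$ with $x$ introduced, so it covers precisely the $k-1$ pairs $\{x,u\}$ for $u \in U'_{i_j}$; because each new element $x \in X\setminus X'$ is paired with each element of $X'$ exactly once across all its bridge blocks (the $\cE'$ unchanged subsets partition $X'$) and new elements are never placed together, each such pair $\{x,u\}$ with $u\in X'$ is covered exactly once, and pairs of the form $\{x,x'\}$ with $x,x'$ both new never arise — so these blocks contribute $0$ to $e^*$. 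This is the step I expect to be the main obstacle: verifying that \emph{no} pair $\{x,u\}$ is covered more than once, i.e. that the bridge blocks genuinely partition the ``cross pairs,'' and that the interleaving of type-(iii) blocks among type-(ii) blocks doesn't accidentally re-cover a pair already handled inside $\cL'$. I would handle this by pointing to the structure of the Preece et al. construction (each $U'_{i_j}$ used once, each new element inserted once per $U'_{i_j}$) exactly as illustrated in Table~\ref{tsccd(10,3)}, and noting the argument is identical to theirs — the only difference being that we no longer assume $\cL'$ is tight.

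Finally, for type (ii): the blocks originally from $\cL'$, when they reappear in $\cL^*$, cover the same within-$X'$ pairs as before, but now some of those pairs may have been pre-covered by earlier blocks of $\cL^*$. However, the only earlier blocks touching $X'$ (other than $\cL'$'s own earlier blocks and the type-(i) / type-(iii) blocks) are again the bridge blocks, which only cover cross pairs $\{x,u\}$, never pairs within $X'$; hence each type-(ii) block has exactly the same excess in $\cL^*$ as it had in $\cL'$, for a total contribution of $e'$. Adding the three contributions gives $e^* = e + e'$. The preservation of the expansion set is inherited directly from Theorem~\ref{Theorem: tsccd +tsccd = tsccd}: if $\cL$ has an (inner) expansion set, its unchanged subsets survive untouched in $\cL^*$ and still partition $X \subseteq X^*$ together with... actually they partition only $X$, so I would instead note — as in the tight case — that the expansion set of $\cL$ extends to one of $X^*$ using the $U'$'s of $\cE'$ that lie outside $X$, or simply cite that the argument is verbatim that of Theorem~\ref{Theorem: tsccd +tsccd = tsccd}. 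I would close by remarking that, by Lemma~\ref{Lemma: excess}, if both input designs are economical and $e+e' \le k-2$ then the output is economical, which is the form in which the theorem will actually be applied later.
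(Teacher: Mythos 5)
Your construction and coverage argument follow the paper's proof closely; the one genuine methodological difference is how the excess is computed. The paper, after verifying that $(X^*,\cL^*)$ is a valid \sccd, evaluates the closed form $e^*=(k-1)b^*+\binom{k-1}{2}-\binom{v^*}{2}$ directly and checks by algebra that it collapses to $e+e'$; since the excess of a \sccd~is a function of $(v,k,b)$ alone, none of the ``exactly once'' claims are needed for that step beyond knowing the result is a \sccd. Your route through Lemma~\ref{Lemma: The Excess of a sccd is the sum of excesses of blocks} is also correct and has the advantage of locating where the excess lives, but it makes the excess computation rest entirely on the delicate combinatorial claims about the bridge blocks and about $B'_1$, the one block of $\cL'$ whose role changes: in $\cL^*$ only one element is introduced in $B'_1$, so it covers only the $k-1$ pairs $\{x,y\}$ with $x=B'_1\setminus B_b$ and $y\in B'_1\cap B_b$, the lost pairs inside $U'_0$ being picked up by $\cL$. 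The paper spells this out explicitly and your type-(ii) discussion should too rather than asserting that every block of $\cL'$ covers ``the same within-$X'$ pairs as before.''

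One detail you must correct: you say each new element of $X\setminus X'$ is inserted into \emph{each} of the $\frac{v'}{k-1}$ unchanged subsets of $\cE'$. That contradicts the block count $\frac{(v-k+1)(v'-k+1)}{k-1}$ you quote, which corresponds to using only the $\frac{v'}{k-1}-1$ members of $\cE'$ other than $U'_0=X\cap X'$; those remaining members partition $X'\setminus X$, which is exactly why each cross pair $\{x,y\}$ with $x\in X\setminus X'$, $y\in X'\setminus X$ is covered once. Inserting bridge blocks at $U'_0$ as well would re-cover pairs $\{x,u\}$ with $u\in X\cap X'\subseteq X$ that $\cL$ already covers, inflating the excess and breaking $e^*=e+e'$. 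With that repaired, your argument matches the paper's.
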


\begin{proof} Suppose $(X,\mathcal{L})$ is a \sccd$(v,k,b)$ with excess $e$ and $(X',\mathcal{L}')$ a \newline \sccd$(v',k,b')$ with excess $e'$ and an outer expansion set $\mathcal{E}' = \{U'_{i_j}: 1 \leq j \leq \frac{v'}{k-1}\}$. Since the reverse of a \sccd ~is a \sccd ~we may assume $\cE'$ contains $U'_0$. We re-label the elements of $(X',\cL')$ so that $B_{b} \cap B'_1 = X \cap X' =U'_0$ and $X, X'$ are otherwise disjoint. To build $(X^*,\cL^*)$ we start by appending $\cL$ with $\cL'$.

Since $(X',\cL')$ has an outer expansion set and the first expansion location of $(X',\mathcal{L}')$ is $U'_{0} = B_b\cap B'_1$ we have that the remaining expansion locations partition $X' \backslash X$. For all $x\in X \backslash X'$ and $2\le j\le \frac{v'}{k-1}$, we let $B''_{i_j,x} = U'_{i_j} \cup \{x\}$ and insert $B''_{i_j,x}$ between $B'_i$ and $B'_{i+1}$ in any order; $\cL''$ is single change. 

The blocks $\cL$ and $\cL'$ cover the same pairs in $\cL^*$ as they did in $\cL$ and $\cL'$, with the exception of $B'_1$. In $\cL'$, $B'_1$ covered all of its pairs. However, in $\cL^*$, $B'_1$ only covers the pairs $\{x,y\}$ where $x=B'_1 \backslash B_b$, and $y\in B'_1 \cap B_b$; every other pair in $B'_1$ is covered in $\cL$. The only pairs not covered in $\cL$ or $\cL'$ are $\{x,y\}$ where $x \in X \backslash X'$ and $y \in X' \backslash X=\bigcup\limits_{j=2}^{n} U'_{i_j}$. $B''_{i_{j},x}$ covers $\{x,y\}$ for any $x \in X \backslash X'$ and  all $y \in U_{i_j}$ and no other $B''_{i_{j},x}$ covers this pair. Therefore $(X^*,\mathcal{L}^*)$ is a \sccd. 

\begin{align*}
    e^* =& (k-1)b^* + \binom{k-1}{2}-\binom{v^*}{2} \\
        =& (k-1)(b + b'+\frac{(v-k+1)(v'-k+1)}{k-1}) + \binom{k-1}{2} \\ 
         & - \binom{v+v'-k+1}{2}\\
        =& (k-1)b + (k-1)b' + vv'-v(k-1) -v'(k-1) +(k-1)^2 + \binom{k-1}{2} \\ 
         & - \frac{(v+v'-(k-1))(v+v'-(k-1)-1)}{2}\\
        =& (k-1)b + (k-1)b' + vv'-v(k-1) -v'(k-1) +(k-1)^2 + \binom{k-1}{2} \\ 
         & - \frac{v^2}{2} -vv' + v(k-1) + \frac{v}{2} - \frac{v'^2}{2} + v'(k-1) + \frac{v'}{2} - \frac{(k-1)^2}{2} - \frac{k-1}{2} \\
        =& (k-1)b + \binom{k-1}{2} - \frac{v^2}{2} + \frac{v}{2} + (k-1)b' + \frac{k-k^2 + 2(k-1)^2}{2} - \frac{v'^2}{2} + \frac{v'}{2} \\
        =& (k-1)b + \binom{k-1}{2} - \binom{v}{2} + (k-1)b' + \binom{k-1}{2} - \binom{v'}{2}\\
        =& e + e'
\end{align*}

Suppose now, that $(X,\mathcal{L})$, has an expansion set $\mathcal{E} = \{U_{i_j}: 1 \leq j \leq \frac{v}{k-1}\}$.  Then $(X^*,\mathcal{L}^*)$ will have an expansion set $\mathcal{E}^* = \{\mathcal{E} \cup \mathcal{E}' \backslash U'_{i_0} \}$.

\end{proof}

\begin{theorem}[$v+2$ Construction] \label{Theorem: create sccd(v+2,k)}\label{Theorem: sccd(v) -> sccd(v+2)}
If a \sccd$(v,k,b)$ with excess $e$ and an outer expansion set exists then a \sccd$(v+2,k,b')$, $b' = b+2\frac{v}{k-1}+1 $, exists with excess $e+k-2$.
\end{theorem}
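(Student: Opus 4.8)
The plan is to derive the $v+2$ construction as a corollary of Theorem~\ref{Theorem: esccd + tsccd = esccd} by taking the second design to be a very small tight building block, namely a \tscccd-style or \tsccd-style configuration on $k+1$ points. Concretely, I would set $(X,\cL)$ to be the given \sccd$(v,k,b)$ with excess $e$ and an outer expansion set, and I would let $(X',\cL')$ be a \sccd$(v',k,b')$ with $v'=k+2$, chosen so that it is tight (excess $e'=0$), so that it has an outer expansion set, and so that it itself has an expansion set. Then Theorem~\ref{Theorem: esccd + tsccd = esccd} immediately yields a \sccd on $v^*=v+v'-k+1 = v+3$ points, which is one too many; so instead the right choice is $v'=k+1$, giving $v^*=v+(k+1)-(k-1)=v+2$, with $b^*=b+b'+\frac{(v-k+1)\cdot 2}{k-1}=b+b'+2\frac{v-k+1}{k-1}$ and excess $e^*=e+e'$. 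Matching this against the claimed $b'=b+2\frac{v}{k-1}+1$ and excess $e+k-2$ forces the building block to be a \sccd$(k+1,k,b')$ with $b'$ satisfying $b' + 2\frac{v-k+1}{k-1} = 2\frac{v}{k-1}+1$, i.e. $b'=2+\frac{2(k-1)}{k-1}-1+\text{(const)}$; after simplification $b' = 3$, and its excess must be $e'=k-2$. So the real content is: exhibit a \sccd$(k+1,k,3)$ with excess exactly $k-2$ that possesses both an outer expansion set and an (inner) expansion set.

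First I would construct that building block explicitly. On the point set $\{1,\dots,k+1\}$, take $B'_1=\{1,2,\dots,k\}$, $B'_2 = \{1,2,\dots,k-1,k+1\}$ (remove $k$, introduce $k+1$), and $B'_3 = \{1,2,\dots,k-2,k+1,k\}$ (remove $k-1$, introduce $k$, i.e.\ bring $k$ back). Every pair among $\{1,\dots,k\}$ is covered in $B'_1$; the pairs $\{i,k+1\}$ for $i\le k-1$ are covered in $B'_2$; the pair $\{k,k+1\}$ is covered in $B'_3$; the pairs $\{i,k\}$ for $i\le k-2$ are re-covered in $B'_3$, giving excess exactly $k-2$, and one checks $(k-1)\cdot 3 + \binom{k-1}{2} - \binom{k+1}{2} = k-2$ as well, consistent with Lemma~\ref{Lemma: excess}. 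For the outer expansion set, since $v'=k+1$ is a multiple of $k-1$ only when $k-1\mid k+1$, i.e.\ $k=2,3$, I need to be careful: in general $\frac{v'}{k-1}$ is not an integer, so an ``expansion set'' in the sense of the paper does not exist for $v'=k+1$ with general $k$. This means the reduction to Theorem~\ref{Theorem: esccd + tsccd = esccd} as literally stated does not go through, and the theorem must instead be proved directly by a dedicated argument analogous to Proposition~\ref{Prop: circular sccd(v+1,k)} (the $v+1$ construction) applied twice.

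So the approach I would actually carry out is the direct one: apply the $v+1$ construction idea twice, tracking excess. Given $(X,\cL)$ a \sccd$(v,k,b)$ with excess $e$ and outer expansion set $\cE$ with $\frac{v}{k-1}$ parts, introduce a new point $v+1$ by inserting, at each of the $\frac{v}{k-1}$ expansion locations, a block equal to that unchanged $(k-1)$-set together with $\{v+1\}$; this yields a \sccd$(v+1,k,b+\frac{v}{k-1})$ which is still tight on the new blocks, hence excess still $e$, and crucially one shows it still has an outer expansion set (the point $v+1$ can be placed with a fresh $(k-2)$-subset). Repeat to introduce $v+2$: but now, to cover the single pair $\{v+1,v+2\}$ and to create the $k-2$ forced repeats, one inserts $\frac{v}{k-1}$ blocks of the form $U\cup\{v+2\}$ plus one extra block containing both $v+1$ and $v+2$; this extra block is where the excess $k-2$ is generated, since its other $k-2$ pairs were already covered. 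Counting: $b' = b + \frac{v}{k-1} + \frac{v}{k-1} + 1 = b + 2\frac{v}{k-1}+1$, and $e \to e + (k-2)$, matching the statement. The main obstacle — and the step I would spend the most care on — is showing that after the first point-insertion the design still admits an outer expansion set whose parts can be chosen disjointly from the machinery needed for the second insertion, and verifying that the single ``bridging'' block for $\{v+1,v+2\}$ can always be inserted compatibly as a single change while contributing exactly $k-2$ to the excess and no more; this is essentially a careful bookkeeping of which pairs are covered where, parallel to the pair-counting already done in the proof of Theorem~\ref{Theorem: esccd + tsccd = esccd}.
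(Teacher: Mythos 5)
Your final construction is essentially the paper's proof: at each of the $\frac{v}{k-1}$ expansion locations of the original design insert a block $U_{i_j}\cup\{y_1\}$ and a block $U_{i_j}\cup\{y_2\}$, and append one bridging block $(U_b\setminus\{x\})\cup\{y_1,y_2\}$ after the last such pair, whose $k-2$ repeated pairs account for the entire increase in excess, giving $b+2\frac{v}{k-1}+1$ blocks. The ``obstacle'' you flag --- re-deriving an outer expansion set for the intermediate $(v+1)$-point design --- does not arise in the paper because both new blocks are inserted at each \emph{original} expansion location in a single pass (exactly what your block count implicitly assumes), which is just as well since $(k-1)\nmid v+1$ in general, so the intermediate design cannot have an expansion set in the paper's sense; your opening reduction to Theorem~\ref{Theorem: esccd + tsccd = esccd} fails for the same divisibility reason, as you correctly note before abandoning it.
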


\begin{proof} 
Let $(X,\mathcal{L}=(B_1,...,B_b))$ be a \sccd$(v,k,b)$ with an expansion set $\cE=\{U_{i_1},...,U_{i_{\frac{v}{k-1}}}=U_b\}$. Let $X'=X\dot \cup \{y_1,y_2\}$ build $(X',\cL')$ as follows. At each expansion location $U_i \in \mathcal{E}$ 
we will insert two blocks, $B'_{j_1}= U_{i_j} \cup \{y_1\}$ and $B'_{j_2}= U_{i_j} \cup \{y_2\}$, between $B_i$ and $B_{i+1}$. Let $x \in U_{\frac{v}{k-1}}$, after $B_{\frac{v}{k-1}_2}$ we will add the block $B'_{\frac{v}{k-1}_3} = U_b \backslash \{x\} \cup \{y_1, y_2\}$. The proof that this is a \sccd ~is similar to Proposition~\ref{Prop: circular sccd(v+1,k)} and Lemma~\ref{Lemma: esccd with tight blocks is esccd}. The excess of $B'_{j_1}$ and $B'_{j_2}$ are zero and the excess of blocks from $\cL$ remain unchanged. The excess of $B'_{\frac{v}{k-1}_3}$ is $k-2$, so 

\[
    e + \sum\limits_{j=1}^{\frac{v}{k-1}}(e'_{j_1} + e'_{j_2}) + e'_{\frac{v}{k-1}_3} = e + k-2
\]

\end{proof}


\begin{corollary}\label{Corollary: create esccd(v+2,k)}\label{Corollary: tsccd(v) -> esccd(v+2)}
    If there exists a \tsccd$(v,k,b)$ with an outer expansion set 
    then there exists an \esccd$(v+2,k,b')$.
\end{corollary}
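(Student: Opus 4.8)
The plan is to derive Corollary~\ref{Corollary: tsccd(v) -> esccd(v+2)} as an immediate consequence of Theorem~\ref{Theorem: sccd(v) -> sccd(v+2)}, the $v+2$ construction. First I would observe that a \tsccd$(v,k,b)$ is, by Lemma~\ref{Lemma: excess}, precisely a \sccd$(v,k,b)$ with excess $e=0$. Since the hypothesis additionally supplies an outer expansion set, Theorem~\ref{Theorem: sccd(v) -> sccd(v+2)} applies verbatim and produces a \sccd$(v+2,k,b')$ with $b'=b+2\frac{v}{k-1}+1$ and excess $e+k-2 = k-2$.

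Next I would invoke Lemma~\ref{Lemma: excess} in the other direction: a (circular) \sccd ~with excess at most $k-2$ is economical. Since the new design has excess exactly $k-2 \le k-2$, it is an \esccd$(v+2,k,b')$, which is the claim. So the two-line argument is: $\tsccd \Rightarrow e=0$; apply the $v+2$ construction to get $e=k-2$; $e=k-2\le k-2 \Rightarrow$ economical.

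The only thing to check carefully — and the place where the ``obstacle,'' such as it is, lives — is that the numerical hypothesis of Theorem~\ref{Theorem: sccd(v) -> sccd(v+2)} is actually met, namely that $\frac{v}{k-1}$ is an integer so that the expansion set exists and $b'$ is an integer. But this is automatic: the very definition of an expansion set requires $X$ to be partitioned into $\frac{v}{k-1}$ unchanged subsets, so $(k-1)\mid v$ whenever an (outer) expansion set is present; hence $b'\in\bN$ with no further work. One should also note, for completeness, that Theorem~\ref{Theorem: sccd(v) -> sccd(v+2)} is stated for \sccd ~without reference to circularity, so the corollary as phrased concerns linear designs; a parenthetical remark could record that the same argument combined with the circular analogue would give the circular version, but strictly the corollary follows from the linear statement alone.

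Finally, if a more self-contained proof were desired, I would simply unwind the construction once more: the $2\frac{v}{k-1}$ inserted blocks $B'_{j_1}=U_{i_j}\cup\{y_1\}$, $B'_{j_2}=U_{i_j}\cup\{y_2\}$ are tight, and the single terminal block $B'_{\frac{v}{k-1}_3}=U_b\setminus\{x\}\cup\{y_1,y_2\}$ re-covers exactly the $\binom{k-1}{2}-\binom{k-3}{2}=k-2$ pairs meeting $\{y_1,y_2\}$ that were already covered by the inserted blocks, so the total excess is $0+(k-2)=k-2\le k-2$; apply Lemma~\ref{Lemma: excess}. I expect the write-up to be three or four lines at most, citing Theorem~\ref{Theorem: sccd(v) -> sccd(v+2)} and Lemma~\ref{Lemma: excess}.
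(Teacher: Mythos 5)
Your proposal is correct and matches the paper's (implicit) argument exactly: the paper leaves this corollary as an immediate consequence of Theorem~\ref{Theorem: sccd(v) -> sccd(v+2)} applied with $e=0$, followed by Lemma~\ref{Lemma: excess} to conclude that excess $k-2$ means economical. Your additional checks (integrality of $\frac{v}{k-1}$, the linear-versus-circular remark) are sound but not needed beyond what the paper already assumes.
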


For example, see Table~\ref{Table: esccd(14,4,2,30)}. The black blocks form a \tsccd(12,4,21). We insert the blue blocks to form the \esccd(14,4,30). 

\begin{table}[H]
\setlength{\tabcolsep}{.55\tabcolsep}
\centering
\caption{An \esccd(14,4,30)}
\label{Table: esccd(14,4,2,30)}
\footnotesize
    \begin{tabular}{lllllllllllllll}
        \small{B}\tiny{$_{1}$} & \small{B}\tiny{$_{2}$} & \small{B}\tiny{$_{3}$} & \small{B}\tiny{$_{4}$} & \small{B}\tiny{$_{5}$} & \small{B}\tiny{$_{6}$} & \small{B}\tiny{$_{7}$} & \small{B}\tiny{$_{8}$} & \small{B}\tiny{$_{9}$} & \small{B}\tiny{$_{10}$} & \small{B}\tiny{$_{11}$} & \small{B}\tiny{$_{12}$} & \small{B}\tiny{$_{13}$} & \small{B}\tiny{$_{14}$} & \small{B}\tiny{$_{15}$}   \\ 
        
        
        
        
        
        6$^*$  & 6  & \textcolor{blue}{6}  & \textcolor{blue}{6}  & 6  & 8$^*$  & 8  & 8  & 8  & \textcolor{blue}{8}  & \textcolor{blue}{8}  & 8 & 8  & 2$^*$  & 4$^*$  \\
        
        3$^*$  & 10$^*$ & \textcolor{blue}{10} & \textcolor{blue}{10} & 10 & 10 & 10 & 10 & 10 & \textcolor{blue}{13}$^*$ & \textcolor{blue}{14}$^*$ & 9$^*$ & 9  & 9  & 9  \\
        
        11$^*$ & 11 & \textcolor{blue}{11} & \textcolor{blue}{11} & 11 & 11 & 11 & 11 & 3$^*$  & \textcolor{blue}{3}  & \textcolor{blue}{3}  & 3 & 12$^*$ & 12 & 12 \\
        
        12$^*$ & 12$_{\land}$ & \textcolor{blue}{13}$^*$ & \textcolor{blue}{14}$^*$ & 4$^*$  & 4  & 2$^*$  & 7$^*$  & 7$_{\land}$  & \textcolor{blue}{7}  & \textcolor{blue}{7}  & 7 & 7  & 7  & 7  \\ \hline

        \small{B}\tiny{$_{16}$} & \small{B}\tiny{$_{17}$}   &
        \small{B}\tiny{$_{18}$} & \small{B}\tiny{$_{19}$} & \small{B}\tiny{$_{20}$} & \small{B}\tiny{$_{21}$} & \small{B}\tiny{$_{22}$} & \small{B}\tiny{$_{23}$} & \small{B}\tiny{$_{24}$} & \small{B}\tiny{$_{25}$} & \small{B}\tiny{$_{26}$} & \small{B}\tiny{$_{27}$} & \small{B}\tiny{$_{28}$} & \small{B}\tiny{$_{29}$} & \small{B}\tiny{$_{30}$}\\ 
        
        
        
        
        
        4  & \textcolor{blue}{13}$^*$ & \textcolor{blue}{14}$^*$ & 1$^*$  & 1  & 1  & 1  & 1  & 1  & 1  & 1  & 1 & \textcolor{blue}{1}  & \textcolor{blue}{1}  & \textcolor{blue}{1}  \\
        
        9  & \textcolor{blue}{9}  & \textcolor{blue}{9}  & 9  & 9  & 9  & 9  & 8$^*$  & 7$^*$  & 2$^*$  & 2  & 2 & \textcolor{blue}{2}  & \textcolor{blue}{2}  & \textcolor{blue}{2}  \\
        
        12 & \textcolor{blue}{12} & \textcolor{blue}{12} & 12 & 11$^*$ & 10$^*$ & 6$^*$  & 6  & 6  & 6  & 3$^*$  & 3 & \textcolor{blue}{13}$^*$ & \textcolor{blue}{14}$^*$ & \textcolor{blue}{14} \\
        
        5$^*$$_{\land}$  & \textcolor{blue}{5}  & \textcolor{blue}{5}  & 5  & 5  & 5  & 5  & 5  & 5  & 5  & 5  & 4$^*$$_{\land}$ & \textcolor{blue}{4}  & \textcolor{blue}{4}  & \textcolor{blue}{13}$^*$
        
    \end{tabular}
\end{table}

Now we consider constructions of circular \sccd ~using two linear \sccd s. We say an expansion set is \textbf{disjoint-capable} if 
\begin{enumerate}
    \item It contains $U_0$ and $U_b$
    \item $U_0=U_1=U_2=...=U_{k-1}$
    \item $U_b = \bigcup\limits^{k-1}_{i=1}(B_i \backslash B_{i+1})$. 
\end{enumerate}
 
The tight \sccd(10,3,22) in Table~\ref{Table: tsccd(10,3,2) extra property} of \cite{tsccd} contains a disjoint-capable expansion set:  $U_0=U_1=U_2= \{a,b\}$, and $U_b = \{c,d\}=(B_1\backslash B_2) \cup (B_2\backslash B_3)$.

\begin{table}[H]
\centering
\caption{Tight \sccd(10,3,22) with a disjoint-capable expansion set.}
\label{Table: tsccd(10,3,2) extra property}
\begin{tabular}{lllllllllll}

B$_1$ & B$_2$ & B$_3$ & B$_4$ & B$_5$ & B$_6$ & B$_7$ & B$_8$ & B$_9$ & B$_{10}$ & B$_{11}$ \\

$\hspace{0.6em}$\textcolor{orange}{a}$^*$ & \textcolor{orange}{a} & \textcolor{orange}{a} & a & a & 4$^*$ & 5$^*$ & 6$^*$ & 6 & 6 & 6 \\ 

$\hspace{0.6em}$\textcolor{orange}{b}$^*$ & \textcolor{orange}{b} & \textcolor{orange}{b} & 2$^*$ & 3$^*$ & 3 & 3 & 3  & c$^*$  & d$^*$  & d  \\ 

$_{\land}$\textcolor{green}{c}$^*$ & \textcolor{green}{d}$^*$ & 1$^*$ & 1 & 1 & 1 & 1 & $1_{\hspace{.3em}\land}$  & 1  & 1  & 2$^*$  \\  

\hline \\

B$_{12}$ & B$_{13}$ & B$_{14}$ & B$_{15}$ & B$_{16}$ & B$_{17}$ & B$_{18}$ & B$_{19}$ & B$_{20}$& B$_{21}$ & B$_{22}$ \\

6 & 6 & 6 & 6 & 4$^*$ & 4 & 4 & 4 & 3$^*$ & 3 & 3 \\

d  & d  & b$^*$  & a$^*$  & a & c$^*$ & 2$^*$ & 2 & 2 & 2 & \textcolor{green}{d}$^*$ \\

4$^*$  & 5$^*$  & 5  & 5  & $5_{\hspace{.3em}\land}$ & 5 & 5 & b$^*$ & b$_{\hspace{.3em}\land}$ & c$^*$ & \textcolor{green}{c}$_{\hspace{.3em}\land}$\\

\end{tabular}
\end{table}

\begin{theorem}[Build disjoint capable] \label{Theorem: create a disjoint-capable outer expansion set tsccd}
If there exists a \sccd$(v,k,b)$ with excess $e$ and a \sccd$(v',k,b')$ with excess $e'$ both with outer expansion sets that use both $U_0, U_b$ and $U'_0, U'_b$ respectively where $|X \cap X'|=k-1$, then for $v*=v+v'-k+1$, $b* = b + b' + \frac{(v'-k+1)(v-k+1)}{k-1}$, and $X^*= X \cup X'$ there exists a \sccd$(v^*,k,b^*)$ with excess $e^* = e + e'$ and a disjoint-capable outer expansion set.
\end{theorem}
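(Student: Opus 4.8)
The plan is to reuse the construction behind Theorem~\ref{Theorem: esccd + tsccd = esccd} essentially verbatim — so that the counts $v^*, b^*, e^*$ and the existence of an expansion set $\cE^* = \cE \cup \cE' \setminus \{U'_0\}$ are inherited for free — and then to spend all the remaining effort arranging the free choices in that construction (which design plays the role of $(X',\cL')$, the order in which the blocks $B''_{i_j,x}$ are inserted, and the two outer unchanged subsets) so as to force the three defining properties of a disjoint-capable expansion set. I would let $(X',\cL')$ be the design that supplies the inserted blocks, relabel so that $X\cap X' = U'_0 = U_b$ (legitimate since $\cE$ uses $U_b$ and $\cE'$ uses $U'_0$, both $(k-1)$-sets), and build $\cL^*$ as in Theorem~\ref{Theorem: esccd + tsccd = esccd}. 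Because $U'_b$ is an outer unchanged subset of $\cL'$, its expansion location sits at the very end of the $\cL'$-portion, so the $v-k+1$ blocks $B''_{\cdot,x} = U'_b\cup\{x\}$, $x\in X\setminus X'$, automatically form a terminal run of $\cL^*$, all sharing the $(k-1)$-set $U'_b$. I would then reverse $\cL^*$; since the reverse of a \sccd ~is a \sccd, this is still a \sccd$(v^*,k,b^*)$ with excess $e+e'$ and expansion set $\cE^*$, but the constant run is now at the front.

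In the reversed design the first $v-k+2 \ge k$ blocks (the $B''$ blocks, then $B'_{b'}$) all contain $U'_b$, so taking the initial outer unchanged subset to be $U'_b$ yields property~(2), with all of $U_0,U_1,\dots,U_{k-1}$ equal to $U'_b$. The first $k-1$ transitions of the reversed design delete exactly the $k-1$ elements of $X\setminus X'$ that were inserted last at the $U'_b$-location, and since the insertion order is arbitrary I would arrange these to be precisely the elements of $U_0$; this is legitimate because $U_0\subseteq X\setminus X'$ (indeed $U_0\subseteq X$ and $U_0\cap U_b=\emptyset$ as distinct members of the disjoint family $\cE$, while $X\cap X' = U_b$). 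Taking the final outer unchanged subset to be $U_0$ — admissible since the last block of the reversed design is $B_1$ and $U_0\subseteq B_1$ — then gives property~(3), $U_{b^*} = \bigcup_{i=1}^{k-1}(B_i\setminus B_{i+1})$. Property~(1) is the assertion that both $U'_b$ and $U_0$ lie in the expansion set used, which holds since $U_0\in\cE\subseteq\cE^*$ and $U'_b\in\cE'\setminus\{U'_0\}\subseteq\cE^*$ (as $U'_b\ne U'_0$); moreover $U'_b$ and $U_0$ are distinct and disjoint ($U'_b\subseteq X'\setminus X$, $U_0\subseteq X$), so $\cE^*$ really remains a partition of $X^*$ with both of them among its parts, and both are admissible outer choices at their respective ends. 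The excess is unchanged by reversal, so it is $e+e'$ by Theorem~\ref{Theorem: esccd + tsccd = esccd}.

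The step I expect to require the most care is the boundary value $v = 2(k-1)$, the smallest possible since $\cE$ must be a disjoint union of at least two $(k-1)$-sets: there the inserted run has only $k-1$ blocks, the $k$-th block of the constant run is $B'_{b'}$, and the $k-1$ deleted elements are forced to be all of $X\setminus X'$, so one must check that $X\setminus X' = U_0$; this holds automatically, since then $\cE=\{U_0,U_b\}$ and $X = U_0\,\dot\cup\, U_b = U_0\,\dot\cup\,(X\cap X')$. (The degenerate case $k=2$ can be handled by hand.) Everything else is the routine bookkeeping already done for Theorem~\ref{Theorem: esccd + tsccd = esccd}, now read off the reversed list.
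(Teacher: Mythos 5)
Your proposal is correct and takes essentially the same route as the paper: the paper's (three-sentence) proof likewise applies the recursive construction of Theorem~\ref{Theorem: esccd + tsccd = esccd}, orders the blocks inserted at the final expansion location so that the last $k-1$ elements introduced are exactly the elements of $U_0$, and then reverses the resulting \sccd. You have merely supplied the verification details the paper leaves implicit (the relabeling $X\cap X'=U_b$ so that $U_0\subseteq X\setminus X'$, the check of the three disjoint-capable properties, and the boundary case $v=2k-2$), all of which check out.
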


\begin{proof}
Apply Theorem~\ref{Theorem: tsccd} to $(X,\cL)$ and $(X',\cL')$ to construct a  \tsccd$(v^*,k,b^*)$, $(X^*,\cL^*)$, with an outer expansion set using $U_0$ and $U_b'$. Insert the last $k-1$ blocks $B_i\cup \{x\}$ so that the last $k-1$ elements introduced are every $x\in U_0$. Reverse the resulting \sccd.
\end{proof}

For example, consider the \tsccd(21,4,69) in Table~\ref{Table: tsccd(21,4,2,69)}. The orange elements denote the unchanged subset and the green elements highlight the elements in $U'_b$ that are introduced in the first $k-1$ blocks. The block numbers that are highlighted in black, red and blue are the blocks of the \sccd(12,4,21), the \sccd(12,4,21), and the inserted $B''_{i_j,x}$ blocks respectively.

\begin{table}[H]
    \setlength{\tabcolsep}{.55\tabcolsep}
    \caption{A \tsccd(21,4,69) with a disjoint-capable outer expansion set. } 
    \label{Table: tsccd(21,4,2,69)} \label{Table: tsccd(21,4,69)}
    \label{Table: tsccd(12,4,2,21)} \label{Table: tsccd(12,4,21)} 
    \label{Table: tsccd(12,4,2,21) with double outer expansion set}
    \label{Table: tsccd(12,4,21) with double outer expansion set}
    \label{Table: tsccd(15,4,2,34)}     \label{Table: tsccd(15,4,34)}

    \begin{tabular}{p{14pt}p{10pt}p{10pt} p{10pt} p{10pt}p{10pt}p{10pt} p{10pt} p{10pt}p{10pt}p{10pt} p{10pt} p{10pt} p{10pt}p{10pt}p{10pt} p{10pt} p{10pt}p{10pt}p{10pt}    p{10pt}}
        $\hspace{0.6em}$\textcolor{blue}{\small{B}\tiny{$_{1}$}} & \textcolor{blue}{\small{B}\tiny{$_{2}$}} & \textcolor{blue}{\small{B}\tiny{$_{3}$}} & \textcolor{blue}{\small{B}\tiny{$_{4}$}} & \textcolor{blue}{\small{B}\tiny{$_{5}$}} & \textcolor{blue}{\small{B}\tiny{$_{6}$}} & \textcolor{blue}{\small{B}\tiny{$_{7}$}} & \textcolor{blue}{\small{B}\tiny{$_{8}$}} & \textcolor{blue}{\small{B}\tiny{$_{9}$}} & \small{B}\tiny{$_{10}$} & \small{B}\tiny{$_{11}$} & \small{B}\tiny{$_{12}$} & \small{B}\tiny{$_{13}$} & \small{B}\tiny{$_{14}$} & \small{B}\tiny{$_{15}$} & \textcolor{blue}{\small{B}\tiny{$_{16}$}} & \textcolor{blue}{\small{B}\tiny{$_{17}$}} & \textcolor{blue}{\small{B}\tiny{$_{18}$}} & \textcolor{blue}{\small{B}\tiny{$_{19}$}} & \textcolor{blue}{\small{B}\tiny{$_{20}$}} \\
        $\hspace{0.9em}\textcolor{orange}{1}^*$       & \textcolor{orange}{1}   & \textcolor{orange}{1}       & 1   & 1   & 1   & 1   & 1       & 1   & 1   & 1   & 1   & 1       & 1   & 1       & 13$^*$ & 14$^*$ & 15$^*$ & 16$^*$ & 17$^*$ \\
        
        $\hspace{0.9em}\textcolor{orange}{2}^*$       & \textcolor{orange}{2}   & \textcolor{orange}{2}       & 2   & 2   & 2   & 2   & 2       & 2   & 2   & 2   & 2   & 2       & 8$^*$  & 8       & 8   & 8   & 8   & 8   & 8   \\
        
        $\hspace{0.9em}\textcolor{orange}{3}^*$       & \textcolor{orange}{3}   & \textcolor{orange}{3}       & 3   & 3   & 3   & 3   & 3       & 3   & 3   & 3   & 3   & 7$^*$      & 7   & 7       & 7   & 7   & 7   & 7   & 7   \\
        
        $_{\hspace{.3em}\land}\textcolor{green}{13}^*$ & \textcolor{green}{14}$^*$ & \textcolor{green}{15}$^*$     & 16$^*$ & 17$^*$ & 18$^*$ & 19$^*$ & 20$^*$     & 21$^*$ & 4$^*$  & 5$^*$  & 6$^*$  & 6       & 6   & 9$^*$$_{\hspace{.3em}\land}$  & 9   & 9   & 9   & 9   & 9   \\ \hline
        
        \textcolor{blue}{\small{B}\tiny{$_{21}$}} & \textcolor{blue}{\small{B}\tiny{$_{22}$}} & \textcolor{blue}{\small{B}\tiny{$_{23}$}} & \textcolor{blue}{\small{B}\tiny{$_{24}$}} & \small{B}\tiny{$_{25}$} & \small{B}\tiny{$_{26}$} & \small{B}\tiny{$_{27}$} & \small{B}\tiny{$_{28}$} & \small{B}\tiny{$_{29}$} & \small{B}\tiny{$_{30}$} & \small{B}\tiny{$_{31}$} & \small{B}\tiny{$_{32}$} & \small{B}\tiny{$_{33}$} & \textcolor{blue}{\small{B}\tiny{$_{34}$}} & \textcolor{blue}{\small{B}\tiny{$_{35}$}} & \textcolor{blue}{\small{B}\tiny{$_{36}$}} & \textcolor{blue}{\small{B}\tiny{$_{37}$}} & \textcolor{blue}{\small{B}\tiny{$_{38}$}} & \textcolor{blue}{\small{B}\tiny{$_{39}$}} & \textcolor{blue}{\small{B}\tiny{$_{40}$}} \\ 
        
        18$^*$     & 19$^*$ & 20$^*$     & 21$^*$ & 3$^*$  & 10$^*$ & 10  & 11$^*$     & 11  & 11  & 11  & 11  & 11      & 11  & 11      & 11  & 11  & 11  & 11  & 11  \\
        
        8       & 8   & 8       & 8   & 8   & 8   & 8   & 8       & 8   & 8   & 8   & 9$^*$  & 10$^*$     & 10  & 10      & 10  & 10  & 10  & 10  & 10  \\
        
        7       & 7   & 7       & 7   & 7   & 7   & 7   & 7       & 7   & 7   & 2$^*$  & 2   & 2       & 13$^*$ & 14$^*$     & 15$^*$ & 16$^*$ & 17$^*$ & 18$^*$ & 19$^*$ \\
        
        9       & 9   & 9       & 9   & 9   & 9   & $5^*$  & 5       & 4$^*$  & 12$^*$ & 12  & 12  & $12_{\hspace{.3em}\land}$ & 12  & 12      & 12  & 12  & 12  & 12  & 12   \\ \hline

        \textcolor{blue}{\small{B}\tiny{$_{41}$}} & \textcolor{blue}{\small{B}\tiny{$_{42}$}} & \small{B}\tiny{$_{43}$} & \small{B}\tiny{$_{44}$} & \small{B}\tiny{$_{45}$} & \small{B}\tiny{$_{46}$} & \small{B}\tiny{$_{47}$} & \small{B}\tiny{$_{48}$} & \textcolor{red}{\small{B}\tiny{$_{49}$}} & \textcolor{red}{\small{B}\tiny{$_{50}$}} & \textcolor{red}{\small{B}\tiny{$_{51}$}} & \textcolor{red}{\small{B}\tiny{$_{52}$}} & \textcolor{red}{\small{B}\tiny{$_{53}$}} & \textcolor{red}{\small{B}\tiny{$_{54}$}} & \textcolor{red}{\small{B}\tiny{$_{55}$}} & \textcolor{red}{\small{B}\tiny{$_{56}$}} & \textcolor{red}{\small{B}\tiny{$_{57}$}} & \textcolor{red}{\small{B}\tiny{$_{58}$}} & \textcolor{red}{\small{B}\tiny{$_{59}$}} & \textcolor{red}{\small{B}\tiny{$_{60}$}}  \\
        
        11      & 11  & 11      & 11  & 11  & 4$^*$  & 4   & 4       & 4   & 4   & 4   & 4   & 4       & 4   & 6$^*$      & 19$^*$ & 19  & 20$^*$ & 20  & 20  \\
        10      & 10  & 10      & 10  & 10  & 10  & 5$^*$  & 5       & 5   & 5   & 5   & 5   & 17$^*$     & 17  & 17      & 17  & 17  & 17  & 17  & 17  \\
        20$^*$     & 21$^*$ & 1$^*$      & 3$^*$  & 6$^*$  & 6   & 6   & 6       & 6   & 6   & 6   & 16$^*$ & 16      & 16  & 16      & 16  & 16  & 16  & 16  & 16  \\
        12      & 12  & 12      & 12  & 12  & 12  & 12  & $9^*_{\hspace{.3em}\land}$  & 13$^*$ & 14$^*$ & 15$^*$ & 15  & 15      & 18$^*$ & $18_{\hspace{.3em}\land}$ & 18  & 14$^*$ & 14  & 13$^*$ & 21$^*$  \\ \hline
        
        \textcolor{red}{\small{B}\tiny{$_{61}$}} & \textcolor{red}{\small{B}\tiny{$_{62}$}} & \textcolor{red}{\small{B}\tiny{$_{63}$}} & \textcolor{red}{\small{B}\tiny{$_{64}$}} & \textcolor{red}{\small{B}\tiny{$_{65}$}} & \textcolor{red}{\small{B}\tiny{$_{66}$}} & \textcolor{red}{\small{B}\tiny{$_{67}$}} & \textcolor{red}{\small{B}\tiny{$_{68}$}} & \textcolor{red}{\small{B}\tiny{$_{69}$}} \\
        20      & 20  & 20      & 20  & 20  & 20  & 13$^*$ & 13      & \textcolor{green}{13}   \\
        17      & 18$^*$ & 19$^*$     & 19  & 19  & 19  & 19  & 14$^*$     & \textcolor{green}{14}   \\
        5$^*$      & 5   & 5       & 4$^*$  & 6$^*$  & 15$^*$ & 15  & 15      & \textcolor{green}{15}   \\
        21      & 21  & $21_{\hspace{.3em}\land}$ & 21  & 21  & 21  & 21  & 21  & $18^*_{\hspace{.3em}\land}$    
    \end{tabular}
\end{table}

\begin{corollary}\label{Corollary: tsccd(36,5,3,156) with disjoint capable expansion set exists}
    There exists a \tsccd(36,5,3,156) with a disjoint capable expansion set. 
\end{corollary}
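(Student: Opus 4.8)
My plan is to obtain the \tsccd$(36,5,156)$ as the amalgam of two copies of the known \tsccd$(20,5,46)$ via Theorem~\ref{Theorem: create a disjoint-capable outer expansion set tsccd} — this is the $k=5$ analogue of the construction that produced the \tsccd$(21,4,69)$ of Table~\ref{Table: tsccd(21,4,69)} from two copies of a \tsccd$(12,4,21)$. Take $k=5$ and $v=v'=20$. Then $v^{*}=v+v'-k+1=36$ and
\[
b^{*}=b+b'+\frac{(v-k+1)(v'-k+1)}{k-1}=46+46+\frac{16\cdot 16}{4}=156,
\]
which agrees with $g_{1}(36,5,2)=\frac{\binom{36}{2}-\binom{5}{2}}{4}+1=156$. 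Since a \tsccd$(20,5,46)$ exists by Theorem~\ref{Theorem: Previously known groups of sccd}(4) and has excess $0$, Theorem~\ref{Theorem: create a disjoint-capable outer expansion set tsccd} yields a \sccd$(36,5,156)$ of excess $e^{*}=0+0=0$ carrying a disjoint-capable outer expansion set; by Lemma~\ref{Lemma: excess} excess $0$ forces tightness, so this is exactly the claimed design.

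The only hypothesis of Theorem~\ref{Theorem: create a disjoint-capable outer expansion set tsccd} that is not immediate is that the \tsccd$(20,5,46)$ carries an \emph{outer} expansion set using both of its end unchanged subsets $U_{0}$ and $U_{46}$. Unwinding the definition, I need five pairwise-disjoint unchanged subsets of the design whose union is all $20$ points, one of which is a $(k-1)$-subset of $B_{1}$ (playing the role of $U_{0}$), another a $(k-1)$-subset of $B_{46}$ (playing the role of $U_{46}$), and the remaining three genuine internal unchanged subsets $U_{i}$. There is slack here because $U_{0}$ may be taken to be any $4$-subset of $B_{1}$ and $U_{46}$ any $4$-subset of $B_{46}$; so in practice I would exhibit the explicit \tsccd$(20,5,46)$ of \cite{Phillips_20} (relabelling or reversing it if convenient) with four positions marked with carets, verifying that three internal unchanged subsets can be chosen disjoint from a suitable pair of end subsets and that the five together exhaust $X$. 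Once the two marked copies $(X,\cL)$ and $(X',\cL')$ are fixed, I relabel the second so that $|X\cap X'|=k-1=4$ as required and apply Theorem~\ref{Theorem: create a disjoint-capable outer expansion set tsccd} verbatim.

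The main obstacle is therefore bookkeeping rather than conceptual: pinning down a concrete \tsccd$(20,5,46)$ together with an outer expansion set of the required shape (and presenting it, e.g.\ in a table). Should the Phillips design not admit such a set directly, the fallback is to build a \tsccd$(20,5,46)$ with the property from scratch using the recursive tools already available — Proposition~\ref{Prop: circular sccd(v+1,k)} together with Theorem~\ref{Theorem: tsccd +tsccd = tsccd} — but I expect the direct route to suffice.
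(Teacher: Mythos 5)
Your proposal is correct and follows essentially the same route as the paper: the paper's proof simply notes that one of the \tsccd$(20,5,46)$ of \cite{Phillips_20} has an outer expansion set containing both $U_0$ and $U_b$, and applies Theorem~\ref{Theorem: create a disjoint-capable outer expansion set tsccd} to two copies. Your parameter check ($v^*=36$, $b^*=156=g_1(36,5,2)$, excess $0+0=0$) and your identification of the only nontrivial hypothesis match what the paper relies on, with the resulting design displayed in Table~\ref{Table: tsccd(36,5,2,156) disjoint capable}.
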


\begin{proof}
    One of the \tsccd(20,5,46) from \cite{Phillips_20} has an expansion set containing both $U_0$ and $U_b$. Apply Theorem~\ref{Theorem: create a disjoint-capable outer expansion set tsccd} to two copies.
\end{proof}

This \sccd(36,5,156) is shown in Table~\ref{Table: tsccd(36,5,2,156) disjoint capable}. The black, red, and blue blocks are two copies of a \tsccd(20,5,46) and the inserted $B''_{i_j,x}$ blocks respectively.

\begin{table}[H]
\setlength{\tabcolsep}{.55\tabcolsep}
\centering
\caption{A \tsccd(36,5,156) with a disjoint-capable outer expansion set} 
\label{Table: tsccd(36,5,2,156) disjoint capable}
\label{Table: tsccd(20,5,2,46) - origional}
\label{tsccd(20,5)'} 
\label{Table: tsccd(20,5,2,46) Relabeled 1}
\label{Table: tsccd(36,5,156) disjoint capable}
\label{Table: tsccd(20,5,46) - origional}
 \label{Table: tsccd(20,5,46) Relabeled 1}
\footnotesize
\begin{tabular}{p{14pt}p{10pt}p{10pt} p{10pt} p{10pt}p{10pt}p{10pt} p{10pt} p{10pt}p{10pt}p{10pt} p{10pt} p{10pt} p{10pt}p{10pt}p{10pt} p{10pt} p{10pt}p{10pt}p{10pt}}

$\hspace{0.6em}$\small{B}\tiny{$_{1}$} & \small{B}\tiny{$_{2}$} & \small{B}\tiny{$_{3}$} & \small{B}\tiny{$_{4}$} & \small{B}\tiny{$_{5}$} & \small{B}\tiny{$_{6}$} & \small{B}\tiny{$_{7}$} & \small{B}\tiny{$_{8}$} & \small{B}\tiny{$_{9}$} & \small{B}\tiny{$_{10}$} & \small{B}\tiny{$_{11}$} & \small{B}\tiny{$_{12}$} & \small{B}\tiny{$_{13}$} & \small{B}\tiny{$_{14}$} & \small{B}\tiny{$_{15}$} & \small{B}\tiny{$_{16}$} & \small{B}\tiny{$_{17}$} & \small{B}\tiny{$_{18}$} & \small{B}\tiny{$_{19}$} & \small{B}\tiny{$_{20}$} \\
$\hspace{0.6em}$\textcolor{blue}{25}$^*$&\textcolor{blue}{29}$^*$&\textcolor{blue}{30}$^*$&\textcolor{blue}{34}$^*$&\textcolor{blue}{21}$^*$&\textcolor{blue}{22}$^*$&\textcolor{blue}{23}$^*$&\textcolor{blue}{24}$^*$&\textcolor{blue}{26}$^*$&\textcolor{blue}{27}$^*$&\textcolor{blue}{28}$^*$&\textcolor{blue}{31}$^*$&\textcolor{blue}{32}$^*$&\textcolor{blue}{33}$^*$&\textcolor{blue}{35}$^*$&\textcolor{blue}{36}$^*$&  1$^*$ &  1      & 1     & 1           \\

$\hspace{0.6em}$\textcolor{blue}{2}$^*$&\textcolor{blue}{2}&\textcolor{blue}{2}&\textcolor{blue}{2}&\textcolor{blue}{2}&\textcolor{blue}{2}&\textcolor{blue}{2}&\textcolor{blue}{2}&\textcolor{blue}{2}&\textcolor{blue}{2}&\textcolor{blue}{2}&\textcolor{blue}{2}&\textcolor{blue}{2}&\textcolor{blue}{2}&\textcolor{blue}{2}&\textcolor{blue}{2}        & 2      & 2      & 2     & 2      \\

$\hspace{0.6em}$\textcolor{blue}{3}$^*$ &\textcolor{blue}{3}&\textcolor{blue}{3}&\textcolor{blue}{3}&\textcolor{blue}{3}&\textcolor{blue}{3}&\textcolor{blue}{3}&\textcolor{blue}{3}&\textcolor{blue}{3}&\textcolor{blue}{3}&\textcolor{blue}{3}&\textcolor{blue}{3}&\textcolor{blue}{3}&\textcolor{blue}{3}&\textcolor{blue}{3}&\textcolor{blue}{3}       & 3      & 3      & 3     & 8$^*$  \\

$\hspace{0.6em}$\textcolor{blue}{4}$^*$ &\textcolor{blue}{4}&\textcolor{blue}{4}&\textcolor{blue}{4}&\textcolor{blue}{4}&\textcolor{blue}{4}&\textcolor{blue}{4}&\textcolor{blue}{4}&\textcolor{blue}{4}&\textcolor{blue}{4}&\textcolor{blue}{4}&\textcolor{blue}{4}&\textcolor{blue}{4}&\textcolor{blue}{4}&\textcolor{blue}{4}&\textcolor{blue}{4}& 4      & 4      & 7$^*$ & 7     \\

$_{\land}$\textcolor{blue}{5}$^*$  &\textcolor{blue}{5}&\textcolor{blue}{5}&\textcolor{blue}{5}&\textcolor{blue}{5}&\textcolor{blue}{5}&\textcolor{blue}{5}&\textcolor{blue}{5}&\textcolor{blue}{5}&\textcolor{blue}{5}&\textcolor{blue}{5}&\textcolor{blue}{5}&\textcolor{blue}{5}&\textcolor{blue}{5}&\textcolor{blue}{5}&\textcolor{blue}{5}     & 5     & 6$^*$  & 6     & 6     \\ \hline

\small{B}\tiny{$_{21}$} & \small{B}\tiny{$_{22}$} & \small{B}\tiny{$_{23}$} & \small{B}\tiny{$_{24}$} & \small{B}\tiny{$_{25}$} & \small{B}\tiny{$_{26}$} & \small{B}\tiny{$_{27}$} & \small{B}\tiny{$_{28}$} & \small{B}\tiny{$_{29}$} & \small{B}\tiny{$_{30}$} & \small{B}\tiny{$_{31}$} & \small{B}\tiny{$_{32}$} & \small{B}\tiny{$_{33}$} & \small{B}\tiny{$_{34}$} & \small{B}\tiny{$_{35}$} & \small{B}\tiny{$_{36}$} & \small{B}\tiny{$_{37}$} & \small{B}\tiny{$_{38}$} & \small{B}\tiny{$_{39}$} & \small{B}\tiny{$_{40}$} \\
 1      & 1   &\textcolor{blue}{1}&\textcolor{blue}{1}&\textcolor{blue}{1}&\textcolor{blue}{1}&\textcolor{blue}{1}&\textcolor{blue}{1}&\textcolor{blue}{1}&\textcolor{blue}{1}&\textcolor{blue}{1}&\textcolor{blue}{1}&\textcolor{blue}{1}&\textcolor{blue}{1}&\textcolor{blue}{1}&\textcolor{blue}{1}&\textcolor{blue}{1}&\textcolor{blue}{1}   & 1      & 1     \\

 9$^*$  & 9    &\textcolor{blue}{21}$^*$&\textcolor{blue}{22}$^*$&\textcolor{blue}{23}$^*$&\textcolor{blue}{24}$^*$&\textcolor{blue}{25}$^*$&\textcolor{blue}{26}$^*$&\textcolor{blue}{27}$^*$&\textcolor{blue}{28}$^*$&\textcolor{blue}{29}$^*$&\textcolor{blue}{30}$^*$&\textcolor{blue}{31}$^*$&\textcolor{blue}{32}$^*$&\textcolor{blue}{33}$^*$&\textcolor{blue}{34}$^*$&\textcolor{blue}{35}$^*$&\textcolor{blue}{36}$^*$            &{11}$^*$&       12$^*$ \\

8      & 10$^*$   &\textcolor{blue}{10}&\textcolor{blue}{10}&\textcolor{blue}{10}&\textcolor{blue}{10}&\textcolor{blue}{10}&\textcolor{blue}{10}&\textcolor{blue}{10}&\textcolor{blue}{10}&\textcolor{blue}{10}&\textcolor{blue}{10}&\textcolor{blue}{10}&\textcolor{blue}{10}&\textcolor{blue}{10}&\textcolor{blue}{10}&\textcolor{blue}{10}&\textcolor{blue}{10}    & 10     & 10     \\

7      & 7      &\textcolor{blue}{7}&\textcolor{blue}{7}&\textcolor{blue}{7}&\textcolor{blue}{7}&\textcolor{blue}{7}&\textcolor{blue}{7}&\textcolor{blue}{7}&\textcolor{blue}{7}&\textcolor{blue}{7}&\textcolor{blue}{7}&\textcolor{blue}{7}&\textcolor{blue}{7}&\textcolor{blue}{7}&\textcolor{blue}{7}&\textcolor{blue}{7}&\textcolor{blue}{7}      & 7      & 7      \\

6      & 6$_{\land}$  &\textcolor{blue}{6}&\textcolor{blue}{6}&\textcolor{blue}{6}&\textcolor{blue}{6}&\textcolor{blue}{6}&\textcolor{blue}{6}&\textcolor{blue}{6}&\textcolor{blue}{6}&\textcolor{blue}{6}&\textcolor{blue}{6}&\textcolor{blue}{6}&\textcolor{blue}{6}&\textcolor{blue}{6}&\textcolor{blue}{6}&\textcolor{blue}{6}&\textcolor{blue}{6}      & 6      & 6      \\ \hline

\small{B}\tiny{$_{41}$} & \small{B}\tiny{$_{42}$} & \small{B}\tiny{$_{43}$} & \small{B}\tiny{$_{44}$} & \small{B}\tiny{$_{45}$} & \small{B}\tiny{$_{46}$} & \small{B}\tiny{$_{47}$} & \small{B}\tiny{$_{48}$} & \small{B}\tiny{$_{49}$} & \small{B}\tiny{$_{50}$} & \small{B}\tiny{$_{51}$} & \small{B}\tiny{$_{52}$} & \small{B}\tiny{$_{53}$} & \small{B}\tiny{$_{54}$} & \small{B}\tiny{$_{55}$} & \small{B}\tiny{$_{56}$} & \small{B}\tiny{$_{57}$} & \small{B}\tiny{$_{58}$} & \small{B}\tiny{$_{59}$} & \small{B}\tiny{$_{60}$} \\
 1 &1      & 1      & 1      & 1           & 1      & 1      & 1      & 5$^*$ & 5     & 5      & 2$^*$  & 11$^*$        &\textcolor{blue}{11}&\textcolor{blue}{11}&\textcolor{blue}{11}&\textcolor{blue}{11}&\textcolor{blue}{11}&\textcolor{blue}{11}&\textcolor{blue}{11}\\

12     & 12     & 12     & 12     & 12          & 12     & 12     & 12     & 12    & 12    & 12     & 12   &12        &\textcolor{blue}{12}&\textcolor{blue}{12}&\textcolor{blue}{12}&\textcolor{blue}{12}&\textcolor{blue}{12}&\textcolor{blue}{12}&\textcolor{blue}{12}  \\

13$^*$ & 14$^*$ & 14     & 14     & 14          & 18$^*$ & 19$^*$ & 19     & 19    & 19    & 19     & 19   &19       &\textcolor{blue}{19}&\textcolor{blue}{19}&\textcolor{blue}{19}&\textcolor{blue}{19}&\textcolor{blue}{19}&\textcolor{blue}{19}&\textcolor{blue}{19}  \\

 7      & 7      & 15$^*$ & 16$^*$ & 17$^*$      & 17     & 17     & 20$^*$ & 20    & 20    & 20     & 20 &20        &\textcolor{blue}{20}&\textcolor{blue}{20}&\textcolor{blue}{20}&\textcolor{blue}{20}&\textcolor{blue}{20}&\textcolor{blue}{20}&\textcolor{blue}{20}\\

6      & 6      & 6      & 6      & 6           & 6      & 6      & 6      & 6     & 8$^*$ & 9$^*$  & 9   &9$_{\land}$       &\textcolor{blue}{21}$^*$&\textcolor{blue}{22}$^*$&\textcolor{blue}{23}$^*$&\textcolor{blue}{24}$^*$&\textcolor{blue}{25}$^*$&\textcolor{blue}{26}$^*$&\textcolor{blue}{27}$^*$   \\ \hline

\small{B}\tiny{$_{61}$} & \small{B}\tiny{$_{62}$} & \small{B}\tiny{$_{63}$} & \small{B}\tiny{$_{64}$} & \small{B}\tiny{$_{65}$} & \small{B}\tiny{$_{66}$} & \small{B}\tiny{$_{67}$} & \small{B}\tiny{$_{68}$} & \small{B}\tiny{$_{69}$} & \small{B}\tiny{$_{70}$} & \small{B}\tiny{$_{71}$} & \small{B}\tiny{$_{72}$} & \small{B}\tiny{$_{73}$} & \small{B}\tiny{$_{74}$} & \small{B}\tiny{$_{75}$} & \small{B}\tiny{$_{76}$} & \small{B}\tiny{$_{77}$} & \small{B}\tiny{$_{78}$} & \small{B}\tiny{$_{79}$} & \small{B}\tiny{$_{80}$} \\
$\hspace{0.4em}$\textcolor{blue}{11}&\textcolor{blue}{11}&\textcolor{blue}{11}&\textcolor{blue}{11}&\textcolor{blue}{11}&\textcolor{blue}{11}&\textcolor{blue}{11}&\textcolor{blue}{11}&\textcolor{blue}{11}        & 11     & 11    & 11     & 11     & 11           & 7$^*$  & 15$^*$ & 15     & 15     & 15     & 15      \\

$\hspace{0.4em}$\textcolor{blue}{12}&\textcolor{blue}{12}&\textcolor{blue}{12}&\textcolor{blue}{12}&\textcolor{blue}{12}&\textcolor{blue}{12}&\textcolor{blue}{12}&\textcolor{blue}{12}&\textcolor{blue}{12}            & 12     & 12    & 13$^*$ & 14$^*$ & 18$^*$       & 18     & 18     & 10$^*$ & 16$^*$ & 16     & 16      \\

$\hspace{0.4em}$\textcolor{blue}{19}&\textcolor{blue}{19}&\textcolor{blue}{19}&\textcolor{blue}{19}&\textcolor{blue}{19}&\textcolor{blue}{19}&\textcolor{blue}{19}&\textcolor{blue}{19}&\textcolor{blue}{19}             & 19     & 19    & 19     & 19     & 19           & 19     & 19     & 19     & 19     & 17$^*$ & 17      \\

$\hspace{0.4em}$\textcolor{blue}{20}&\textcolor{blue}{20}&\textcolor{blue}{20}&\textcolor{blue}{20}&\textcolor{blue}{20}&\textcolor{blue}{20}&\textcolor{blue}{20}&\textcolor{blue}{20}&\textcolor{blue}{20}            & 20     & 20    & 20     & 20     & 20           & 20     & 20     & 20     & 20     & 20     & 9$^*$    \\

$\hspace{0.4em}$\textcolor{blue}{28}$^*$&\textcolor{blue}{29}$^*$&\textcolor{blue}{30}$^*$&\textcolor{blue}{31}$^*$&\textcolor{blue}{32}$^*$&\textcolor{blue}{33}$^*$&\textcolor{blue}{34}$^*$&\textcolor{blue}{35}$^*$&\textcolor{blue}{36}$^*$       & 3$^*$  & 4$^*$ & 4      & 4      & 4            & 4      & 4      & 4      & 4      & 4      & 4       \\ \hline 

\small{B}\tiny{$_{81}$} & \small{B}\tiny{$_{82}$} & \small{B}\tiny{$_{83}$} & \small{B}\tiny{$_{84}$} & \small{B}\tiny{$_{85}$} & \small{B}\tiny{$_{86}$} & \small{B}\tiny{$_{87}$} & \small{B}\tiny{$_{88}$} & \small{B}\tiny{$_{89}$} & \small{B}\tiny{$_{90}$} & \small{B}\tiny{$_{91}$} & \small{B}\tiny{$_{92}$} & \small{B}\tiny{$_{93}$} & \small{B}\tiny{$_{94}$} & \small{B}\tiny{$_{95}$} & \small{B}\tiny{$_{96}$} & \small{B}\tiny{$_{97}$} & \small{B}\tiny{$_{98}$} & \small{B}\tiny{$_{99}$} & \small{B}\tiny{$_{100}$} \\
$\hspace{0.4em}$15    &\textcolor{blue}{15}&\textcolor{blue}{15}&\textcolor{blue}{15}&\textcolor{blue}{15}&\textcolor{blue}{15}&\textcolor{blue}{15}&\textcolor{blue}{15}&\textcolor{blue}{15}&\textcolor{blue}{15}&\textcolor{blue}{15}&\textcolor{blue}{15}&\textcolor{blue}{15}&\textcolor{blue}{15}&\textcolor{blue}{15}&\textcolor{blue}{15}&\textcolor{blue}{15}      & 15     & 15     & 15     \\

$\hspace{0.4em}$16    &\textcolor{blue}{16}&\textcolor{blue}{16}&\textcolor{blue}{16}&\textcolor{blue}{16}&\textcolor{blue}{16}&\textcolor{blue}{16}&\textcolor{blue}{16}&\textcolor{blue}{16}&\textcolor{blue}{16}&\textcolor{blue}{16}&\textcolor{blue}{16}&\textcolor{blue}{16}&\textcolor{blue}{16}&\textcolor{blue}{16}&\textcolor{blue}{16}&\textcolor{blue}{16}      & 16     & 16     & 16    \\

$\hspace{0.4em}$17     &\textcolor{blue}{17}&\textcolor{blue}{17}&\textcolor{blue}{17}&\textcolor{blue}{17}&\textcolor{blue}{17}&\textcolor{blue}{17}&\textcolor{blue}{17}&\textcolor{blue}{17}&\textcolor{blue}{17}&\textcolor{blue}{17}&\textcolor{blue}{17}&\textcolor{blue}{17}&\textcolor{blue}{17}&\textcolor{blue}{17}&\textcolor{blue}{17}&\textcolor{blue}{17}     & 17     & 17     & 17    \\

$\hspace{0.4em}$8$^*$    &\textcolor{blue}{8}&\textcolor{blue}{8}&\textcolor{blue}{8}&\textcolor{blue}{8}&\textcolor{blue}{8}&\textcolor{blue}{8}&\textcolor{blue}{8}&\textcolor{blue}{8}&\textcolor{blue}{8}&\textcolor{blue}{8}&\textcolor{blue}{8}&\textcolor{blue}{8}&\textcolor{blue}{8}&\textcolor{blue}{8}&\textcolor{blue}{8}&\textcolor{blue}{8}   & 8      & 8      & 2$^*$  \\

$\hspace{0.4em}$4$_{\land}$     &\textcolor{blue}{21}$^*$&\textcolor{blue}{22}$^*$&\textcolor{blue}{23}$^*$&\textcolor{blue}{24}$^*$&\textcolor{blue}{25}$^*$&\textcolor{blue}{26}$^*$&\textcolor{blue}{27}$^*$&\textcolor{blue}{28}$^*$&\textcolor{blue}{29}$^*$&\textcolor{blue}{30}$^*$&\textcolor{blue}{31}$^*$&\textcolor{blue}{32}$^*$&\textcolor{blue}{33}$^*$&\textcolor{blue}{34}$^*$&\textcolor{blue}{35}$^*$&\textcolor{blue}{36}$^*$        & 3$^*$  & 11$^*$ & 11     \\ \hline

\small{B}\tiny{$_{101}$} & \small{B}\tiny{$_{102}$} & \small{B}\tiny{$_{103}$} & \small{B}\tiny{$_{104}$} & \small{B}\tiny{$_{105}$} & \small{B}\tiny{$_{106}$} & \small{B}\tiny{$_{107}$} & \small{B}\tiny{$_{108}$} & \small{B}\tiny{$_{109}$} & \small{B}\tiny{$_{110}$} & \small{B}\tiny{$_{111}$} & \small{B}\tiny{$_{112}$} & \small{B}\tiny{$_{113}$} & \small{B}\tiny{$_{114}$} & \small{B}\tiny{$_{115}$} & \small{B}\tiny{$_{116}$} & \small{B}\tiny{$_{117}$} & \small{B}\tiny{$_{118}$} & \small{B}\tiny{$_{119}$} & \small{B}\tiny{$_{120}$} \\
$\hspace{0.4em}$15    & 15    & 15     & 10$^*$ & 10                  & 10     & 10    & 10     & 10     & 9$^*$        & \textcolor{red}{9} & \textcolor{red}{9}      & \textcolor{red}{23}$^*$ & \textcolor{red}{23}      & \textcolor{red}{23}      & \textcolor{red}{23}            & \textcolor{red}{23}      & \textcolor{red}{23}      & \textcolor{red}{23}      & \textcolor{red}{23}       \\

$\hspace{0.4em}$16    & 16    & 16     & 16 & 16                    & 14$^*$ & 14    & 14     & 14     & 14      &    \textcolor{red}{14} & \textcolor{red}{14}      & \textcolor{red}{14}     & \textcolor{red}{14}      & \textcolor{red}{25}$^*$  & \textcolor{red}{25}            & \textcolor{red}{27}$^*$ & \textcolor{red}{28}$^*$ & \textcolor{red}{28}     & \textcolor{red}{28}        \\

$\hspace{0.4em}$17    & 17    & 17     & 17  & 18$^*$                & 18     & 18    & 18     & 18     & 18            & \textcolor{red}{18} & \textcolor{red}{18}      & \textcolor{red}{18}     & \textcolor{red}{24}$^*$  & \textcolor{red}{24}      & \textcolor{red}{26}$^*$       & \textcolor{red}{26}     & \textcolor{red}{26}     & \textcolor{red}{29}$^*$ & \textcolor{red}{30}$^*$     \\

$\hspace{0.4em}$5$^*$ & 5     & 5      & 5 & 5                     & 5      & 2$^*$ & 8$^*$  & 3$^*$  & 3              & \textcolor{red}{21}$^*$ & \textcolor{red}{21}      & \textcolor{red}{21}     & \textcolor{red}{21}      & \textcolor{red}{21}      & \textcolor{red}{21}            & \textcolor{red}{21}      & \textcolor{red}{21}      & \textcolor{red}{21}      & \textcolor{red}{21}      \\

$\hspace{0.4em}$11    & 7$^*$ & 13$^*$ & 13  & 13                    & 13     & 13    & 13     & 13     & 13$_{\land}$  & \textcolor{red}{13}       & \textcolor{red}{22}$^*$  & \textcolor{red}{22}     & \textcolor{red}{22}      & \textcolor{red}{22}      & \textcolor{red}{22}$_{\land}$  & \textcolor{red}{22}      & \textcolor{red}{22}      & \textcolor{red}{22}      & \textcolor{red}{22}  \\ \hline

\small{B}\tiny{$_{121}$} & \small{B}\tiny{$_{122}$} & \small{B}\tiny{$_{123}$} & \small{B}\tiny{$_{124}$} & \small{B}\tiny{$_{125}$} & \small{B}\tiny{$_{126}$} & \small{B}\tiny{$_{127}$} & \small{B}\tiny{$_{128}$} & \small{B}\tiny{$_{129}$} & \small{B}\tiny{$_{130}$} & \small{B}\tiny{$_{131}$} & \small{B}\tiny{$_{132}$} & \small{B}\tiny{$_{133}$} & \small{B}\tiny{$_{134}$} & \small{B}\tiny{$_{135}$} & \small{B}\tiny{$_{136}$} & \small{B}\tiny{$_{137}$} & \small{B}\tiny{$_{138}$} & \small{B}\tiny{$_{139}$} & \small{B}\tiny{$_{140}$} \\
$\hspace{0.4em}$\textcolor{red}{31}$^*$ & \textcolor{red}{32}$^*$ & \textcolor{red}{33}$^*$      & \textcolor{red}{33}     & \textcolor{red}{33}     & \textcolor{red}{36}$^*$ & \textcolor{red}{36}    & \textcolor{red}{36}    & \textcolor{red}{36}     & \textcolor{red}{36}    &  \textcolor{red}{36}                    & \textcolor{red}{36}     & \textcolor{red}{36}    & \textcolor{red}{36}     & \textcolor{red}{36}     & \textcolor{red}{36}           & \textcolor{red}{36}     & \textcolor{red}{36}     & \textcolor{red}{36}     & \textcolor{red}{36}           \\

$\hspace{0.4em}$\textcolor{red}{28}     & \textcolor{red}{28}     & \textcolor{red}{28}          & \textcolor{red}{28}     & \textcolor{red}{28}     & \textcolor{red}{28}     & \textcolor{red}{28}    & \textcolor{red}{28}    & \textcolor{red}{28}     & \textcolor{red}{28} & \textcolor{red}{28}                    & \textcolor{red}{28}     & \textcolor{red}{28}    & \textcolor{red}{29}$^*$ & \textcolor{red}{30}$^*$ & \textcolor{red}{34}$^*$       & \textcolor{red}{34}     & \textcolor{red}{34}     & \textcolor{red}{26}$^*$ & \textcolor{red}{32}$^*$     \\

$\hspace{0.4em}$\textcolor{red}{30}     & \textcolor{red}{30}     & \textcolor{red}{30}          & \textcolor{red}{34}$^*$ & \textcolor{red}{35}$^*$ & \textcolor{red}{35}     & \textcolor{red}{35}    & \textcolor{red}{35}    & \textcolor{red}{35}     & \textcolor{red}{35} & \textcolor{red}{35}                    & \textcolor{red}{35}     & \textcolor{red}{35}    & \textcolor{red}{35}     & \textcolor{red}{35}     & \textcolor{red}{35}           & \textcolor{red}{35}     & \textcolor{red}{35}     & \textcolor{red}{35}     & \textcolor{red}{35}        \\

$\hspace{0.4em}$\textcolor{red}{21}      & \textcolor{red}{21}      & \textcolor{red}{21}           & \textcolor{red}{21}      & \textcolor{red}{21}      & \textcolor{red}{21}      & \textcolor{red}{13}$^*$ & \textcolor{red}{13}     & \textcolor{red}{13}      & \textcolor{red}{14}$^*$ & \textcolor{red}{27}$^*$                & \textcolor{red}{27}     & \textcolor{red}{27}    & \textcolor{red}{27}     & \textcolor{red}{27}     & \textcolor{red}{27}           & \textcolor{red}{23}$^*$  & \textcolor{red}{31}$^*$ & \textcolor{red}{31}     & \textcolor{red}{31}     \\

$\hspace{0.4em}$\textcolor{red}{22}      & \textcolor{red}{22}      & \textcolor{red}{22}           & \textcolor{red}{22}      & \textcolor{red}{22}      & \textcolor{red}{22}      & \textcolor{red}{22}     & \textcolor{red}{24}$^*$ & \textcolor{red}{25}$^*$  & \textcolor{red}{25}&  \textcolor{red}{25}$_{\land}$           &  \textcolor{red}{18}$^*$  & \textcolor{red}{9}$^*$ & \textcolor{red}{9}      & \textcolor{red}{9}      & \textcolor{red}{9}            & \textcolor{red}{9}      & \textcolor{red}{9}      & \textcolor{red}{9}      & \textcolor{red}{9}     \\ \hline

\small{B}\tiny{$_{141}$} & \small{B}\tiny{$_{142}$} & \small{B}\tiny{$_{143}$} & \small{B}\tiny{$_{144}$} & \small{B}\tiny{$_{145}$} & \small{B}\tiny{$_{146}$} & \small{B}\tiny{$_{147}$} & \small{B}\tiny{$_{148}$} & \small{B}\tiny{$_{149}$} & \small{B}\tiny{$_{150}$} & \small{B}\tiny{$_{151}$} & \small{B}\tiny{$_{152}$} & \small{B}\tiny{$_{153}$} & \small{B}\tiny{$_{154}$} & \small{B}\tiny{$_{155}$} & \small{B}\tiny{$_{156}$}  \\
$\hspace{0.4em}$\textcolor{red}{36}     & \textcolor{red}{25}$^*$  & \textcolor{red}{24}$^*$       & \textcolor{red}{24}      & \textcolor{red}{24}      & \textcolor{red}{14}$^*$  & \textcolor{red}{13}$^*$ & \textcolor{red}{13}     & \textcolor{red}{13}      & \textcolor{red}{13} & \textcolor{red}{13}                     & \textcolor{red}{13}      & \textcolor{red}{14}$^*$ & \textcolor{red}{24}$^*$  & \textcolor{red}{18}$^*$  & \textcolor{red}{18}           \\

$\hspace{0.4em}$\textcolor{red}{32}     & \textcolor{red}{32}     & \textcolor{red}{32}          & \textcolor{red}{32}     & \textcolor{red}{32}     & \textcolor{red}{32}     & \textcolor{red}{32}    & \textcolor{red}{32}    & \textcolor{red}{32}     & \textcolor{red}{32} & \textcolor{red}{32}                    & \textcolor{red}{30}$^*$ & \textcolor{red}{30}    & \textcolor{red}{30}     & \textcolor{red}{30}     & \textcolor{red}{30}          \\

$\hspace{0.4em}$\textcolor{red}{33}$^*$ & \textcolor{red}{33}     & \textcolor{red}{33}          & \textcolor{red}{33}     & \textcolor{red}{33}     & \textcolor{red}{33}     & \textcolor{red}{33}    & \textcolor{red}{33}    & \textcolor{red}{33}     & \textcolor{red}{33} & \textcolor{red}{34}$^*$                & \textcolor{red}{34}     & \textcolor{red}{34}    & \textcolor{red}{34}     & \textcolor{red}{34}     & \textcolor{red}{34}          \\

$\hspace{0.4em}$\textcolor{red}{31}     & \textcolor{red}{31}     & \textcolor{red}{31}          & \textcolor{red}{31}     & \textcolor{red}{31}     & \textcolor{red}{31}     & \textcolor{red}{31}    & \textcolor{red}{31}    & \textcolor{red}{31}     & \textcolor{red}{26}$^*$ & \textcolor{red}{26}                  & \textcolor{red}{26}     & \textcolor{red}{26}    & \textcolor{red}{26}     & \textcolor{red}{26}     & \textcolor{red}{25}$^*$        \\

$\hspace{0.4em}$\textcolor{red}{9}      & \textcolor{red}{9}      & \textcolor{red}{9}$_{\land}$ & \textcolor{red}{18}$^*$  & \textcolor{red}{27}$^*$ & \textcolor{red}{27}     & \textcolor{red}{27}    & \textcolor{red}{23}$^*$ & \textcolor{red}{29}$^*$ & \textcolor{red}{29} & \textcolor{red}{29}                    & \textcolor{red}{29}     & \textcolor{red}{29}    & \textcolor{red}{29}     & \textcolor{red}{29}     & \textcolor{red}{29}$_{\land}$    
\end{tabular}
\end{table}

We bring the previous lemmas and theorems together to construct circular \sccd s.

\begin{theorem}[Constructing \scccd ~from two \sccd] \label{Theorem: recursive construction for tscccd(v,k,2)}\label{Theorem: 2 sccd to build a circular sccd or scccd} \label{Theorem: sccd + sccd = circular sccd}
If there exists a \sccd$(v,k,b)$ with excess $e$ and a disjoint-capable expansion set and there exists a \sccd$(v',k,b')$ with excess $e'$ where $|X \cap X'|=2k-2$, then for $v^* = v+v'-2k+2$ and $b^*= b+b'-k+1+ \frac{(v-2k+2)(v'-2k+2)}{k-1}$ a circular \sccd$(v^*,k,b^*)$ exists with  excess $e^*=e+e'$ and $X^*=X \cup X'$. Furthermore, if the \sccd$(v',k,b')$ has an outer expansion set using both $U'_0$ and $U'_{b'}$ then the circular \sccd$(v+v'-2k+2,k,b^*)$ has an expansion set. 
\end{theorem}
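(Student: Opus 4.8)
The plan is to build the circular design on $X^*=X\cup X'$ by deleting the $k-1$ blocks at the front of $\cL$, splicing in $\cL'$ together with a family of ``expansion'' blocks that pick up the cross pairs, and using the disjoint-capable shape of $\cL$'s initial segment to glue the resulting path into a cycle with one change at each of the two seams.

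First, unpack and relabel. Writing the disjoint-capable expansion set of $\cL$ as $\cE=\{U_0,U_b,U_{i_3},\dots,U_{i_n}\}$ with $n=v/(k-1)$, the definition forces $B_j=U_0\cup\{x_j\}$ for $1\le j\le k$, where $x_j=B_j\setminus B_{j+1}$, together with $U_b=\{x_1,\dots,x_{k-1}\}$ and $U_0\cap U_b=\emptyset$; hence $U_{i_3},\dots,U_{i_n}$ partition $X\setminus(U_0\cup U_b)$, every index $i_j$ with $j\ge3$ is at least $k$ so all of these locations lie strictly inside the run $B_k,\dots,B_b$, and $b>k$. Relabel the ground set of $\cL'$ so that $X\cap X'=U_0\cup U_b$ (of size $2k-2$, as required) with $X'\setminus X$ fresh, and so that moreover $B'_1\supseteq U_b$ and $B'_{b'}\supseteq U_0$; when $\cL'$ comes with an outer expansion set using $U'_0,U'_{b'}$, take $U'_0=U_b$ and $U'_{b'}=U_0$, which realises those two inclusions.

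Now the construction. Discard $B_1,\dots,B_{k-1}$, keeping $\cL_{\mathrm{tr}}=(B_k,\dots,B_b)$; every pair that was covered only among $B_1,\dots,B_{k-1}$ is a pair of $U_0\cup U_b\subseteq X'$. Arrange $(B_k,\dots,B_b,B'_1,\dots,B'_{b'})$ cyclically and, for each of the $\tfrac{v-2k+2}{k-1}$ locations $U_{i_j}$ ($j\ge3$) and each $x'\in X'\setminus X$, insert $B''_{i_j,x'}=U_{i_j}\cup\{x'\}$ between $B_{i_j}$ and $B_{i_j+1}$; this yields $(b-k+1)+b'+\tfrac{(v-2k+2)(v'-2k+2)}{k-1}=b^*$ blocks on $v^*=v+v'-2k+2$ points. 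Within $\cL_{\mathrm{tr}}$, within $\cL'$, and around each $B''$ the single-change property is clear; at the first seam $B_b$ and $B'_1$ share exactly $U_b$ ($B_b$'s extra point is outside $X'\supseteq B'_1$ and $B'_1$'s extra point is outside $U_b$), and at the second seam $B_k$ and $B'_{b'}$ share exactly $U_0$, so each seam is a single change. Since $\cL_{\mathrm{tr}}$ and $\cL'$ sit as contiguous arcs and each $B''$ is slotted between two blocks containing $U_{i_j}$, the element introduced at each surviving block of $\cL_{\mathrm{tr}}$ or $\cL'$ is the one introduced there originally; thus $\cL_{\mathrm{tr}}$ covers in the cycle every pair of $X$ it covered in $\cL$, the pairs lost with $B_1,\dots,B_{k-1}$ are covered by $\cL'$ (they lie in $X'$), the pairs of $X'$ are covered by $\cL'$, and the cross pairs between $X\setminus X'=\bigcup_{j\ge3}U_{i_j}$ and $X'\setminus X$ are covered by the $B''$. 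Hence the list is a circular \sccd$(v^*,k,b^*)$; by the definition of excess of a circular \sccd, $e^*=(k-1)b^*-\binom{v^*}{2}$, and substituting $e=(k-1)b+\binom{k-1}{2}-\binom{v}{2}$ and the analogous identity for $e'$ gives $e^*=e+e'$ in one line. For the addendum, when $\cL'$ has an outer expansion set using $U'_0=U_b$ and $U'_{b'}=U_0$, its remaining locations partition $X'\setminus X$, and $\cE^*=\{U_0,U_b,U_{i_3},\dots,U_{i_n}\}\cup\{\text{remaining locations of }\cL'\}$ is a pairwise-disjoint family of unchanged subsets of the cycle whose union is $X^*$, hence an expansion set.

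The main obstacle is the splice. Relabelling $\cL'$ so that $B'_1\supseteq U_b$ and $B'_{b'}\supseteq U_0$ simultaneously requires disjoint $(k-1)$-subsets of $B'_1$ and $B'_{b'}$: this is automatic when $\cL'$ carries the outer expansion set, but in general one must verify it can always be arranged and handle the exceptional configurations (for example when $B_b\setminus U_b$ or $B_k\setminus U_0$ lands in $X\cap X'$, or when $B'_1$ and $B'_{b'}$ overlap in more than two points). Once the splice is fixed, the two things needing genuine care are the single-change check at the seams and the observation that no surviving block's introduced element is disturbed; with those in hand, full pair coverage and $e^*=e+e'$ are forced by the block count.
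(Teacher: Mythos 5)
Your construction is essentially identical to the paper's: delete the first $k-1$ blocks of $\cL$ (which the disjoint-capable condition forces to be $U_0$ plus one element of $U_b$ each), append $\cL'$ relabelled so that $U_b\subseteq B'_1$ and $U_0\subseteq B'_{b'}$, insert the blocks $U_{i_j}\cup\{x'\}$ at the interior expansion locations, and close the cycle at the two seams, with the same coverage argument, the same excess computation, and the same assembled expansion set. The splice degeneracies you flag at the end (the need for disjoint $(k-1)$-subsets of $B'_1$ and $B'_{b'}$ when $\cL'$ has no outer expansion set, and the possibility of a seam collapsing to zero changes) are likewise left unaddressed in the paper, which simply asserts the relabelling, so your proposal matches the published proof, caveats included.
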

\begin{proof}
Suppose that $(X,\mathcal{L})$ is a $\sccd(v,k,b)$ with a disjoint capable outer expansion set $\mathcal{E}= \{U_{i_j}: 1 \le j \le \frac{v}{k-1}\}$. Suppose that $(X',\mathcal{L}')$ is a $\sccd(v',k,b')$ with excess $e'$ and $X'$ relabeled so $X'\cap X = U_0 \cup U_b$ and $U_b \subseteq B_1$, $U_{0} \subseteq B_{b'}$. To build $(X^*,\mathcal{M}^*)$, the circular $\sccd(v+v'-2k+2,k,b^*)$, we delete the first $k-1$ blocks from $\mathcal{L}$ and append the blocks from $\mathcal{L}'$. Note, $\cE\backslash \{U_0, U_b\}$  partitions $X \backslash X'$. For  $2 \le j \le \frac{v}{k-1}-1$ and each $x\in X' \backslash X$ we construct $B''_{i_{j},x} = (B_{i_j} \cap B_{i_j+1}) \cup \{x\}$ and insert $B''_{i_{j},x}$ between $B_i$ and $B_{i+1}$ in any order. 

The pairs in $\{x,y\}$ from $X\backslash X'$ are covered only in $\cL$ so are covered in $\cM^*$. Similarly, the pairs $\{x,y\}$ from $X'$ are covered in $\cL'$ inside $\mathcal{M}^*$ with the same excess. $B''_{i_j,x}$ covers $\{x,y\}$ for all $x \in X'\backslash X$ and $y \in X\backslash X'$ and no other $B''_{i_j,x}$ covers this pair. Moreover, $U_k = U_0 \subset B'_{b'}$ so there is a single change between the last block of the design and the first block. Therefore $(X^*,\mathcal{M}^*)$ is a circular $\sccd$.

\begin{align*}
    e^* = & (k-1)b^* - \binom{v^*}{2}\\
        = & (k-1)(b+b'-k+1+\frac{(v-2k+2)(v'-2k+2)}{k-1}) \\
          & -\binom{v +v' - 2k+2}{2} \\
        = & (k-1)b + (k-1)b' - (k-1)^2 +4(k-1)^2-\frac{v^2}{2}-\frac{v'^2}{2}-2(k-1)^2\\
          & +\frac{v}{2}+\frac{v'}{2}-(k-1)\\
        =& (k-1)b - \binom{v}{2} + \frac{(k-1)^2-(k-1)}{2} + (k-1)b' - \binom{v'}{2} + \frac{(k-1)^2-(k-1)}{2} \\
        =& (k-1)b - \binom{v}{2} + \binom{k-1}{2} + (k-1)b' - \binom{v'}{2} + \binom{k-1}{2} \\
        =& e + e'
\end{align*}


Suppose further that $(X',\mathcal{L}')$ has expansion set $\mathcal{E}' = \{U'_{i_j} : 1 \le j\le \frac{v'}{k-1}\}$. Then the circular \sccd, $(X^*, \mathcal{M}^*)$, will have an expansion set  $\mathcal{E}^* = \{(\mathcal{E} \cup \mathcal{E}') \backslash \{ U'_{i_0} , U'_{i_{b'}} \} \}.
$

\end{proof}

For example, consider a \sccd(6,3,7) as in Table~\ref{Table: tsccd(6,3,7) for use in disjoint capable expansion set circular construction} and the tight \sccd(10,3,22) in Table~\ref{Table: tsccd(10,3,2) extra property}, we combine these two \sccd s as prescribed in Theorem~\ref{Theorem: 2 sccd to build a circular sccd or scccd} to construct a \tscccd(12,3,33) as seen in Table~\ref{Table: tscccd(12,3,33) constructed via disjoint capable expansion sets}

\begin{table}[H]
\centering
\begin{tabular}{lllllll}
B$_1$ & B$_2$ & B$_3$ & B$_4$ & B$_5$ & B$_6$ & B$_7$ \\
11$^*$ & 11 & 11 & 11 & c$^*$  & c & d$^*$ \\
d$^*$  & d  & a$^*$  & b$^*$  & b  & b & b \\
$_{\land}$c$^*$  & 12$^*$ & 12$_{\land}$ & 12 & 12 & a$^*$ & a$_{\land}$
\end{tabular}
\caption{A \tsccd(6,3,7) Labeled for use in the Theorem~\ref{Theorem: 2 sccd to build a circular sccd or scccd} construction}
\label{Table: tsccd(6,3,7) for use in disjoint capable expansion set circular construction}
\end{table}

\begin{table}[H]
\centering
\begin{tabular}{lllllllllll}
B$_1$ & B$_2$ & B$_3$ & B$_4$ & B$_5$ & B$_6$ & B$_7$ & B$_8$ & B$_9$ & B$_{10}$ & B$_{11}$ \\
a  & a  & a & 4$^*$ & 5$^*$  & 6$^*$  & 6  & 6  & 6  & 6 & 6 \\
b  & 2$^*$  & 3$^*$ & 3 & 3  & 3  & 11$^*$ & 12$^*$ & c$^*$  & d$^*$ & d \\
1$^*$  & 1  & 1 & 1 & 1  & 1$_{\land}$  & 1  & 1  & 1  & 1 & 2$^*$ \\ \hline

B$_{12}$ & B$_{13}$ & B$_{14}$ & B$_{15}$ & B$_{16}$ & B$_{17}$ & B$_{18}$ & B$_{19}$ & B$_{20}$& B$_{21}$ & B$_{22}$ \\
6  & 6  & 6 & 6 & 4$^*$  & 4  & 4  & 4  & 4  & 4 & 3$^*$ \\
d  & d  & b$^*$ & a$^*$ & a  & 11$^*$ & 12$^*$ & c$^*$  & 2$^*$  & 2 & 2 \\
4$^*$  & 5$^*$  & 5 & 5 & 5$_{\land}$  & 5  & 5  & 5  & 5  & b$^*$ & b$_{\land}$ \\ \hline

B$_{23}$ & B$_{24}$ & B$_{25}$ & B$_{26}$ & B$_{27}$ & B$_{28}$ & B$_{29}$ & B$_{30}$& B$_{31}$ & B$_{32}$ & B$_{33}$ \\
3  & 3  & 3 & 3 & 11$^*$ & 11 & 11 & 11 & c$^*$  & c & d$^*$ \\
2  & 2  & 2 & d$^*$ & d  & d  & a$^*$  & b$^*$  & b  & b & b \\
11$^*$ & 12$^*$ & c$^*$ & c$_{\land}$ & c  & 12$^*$$_{\land}$ & 12 & 12 & 12 & a$^*$ & a$_{\land}$
\end{tabular}
\caption{A tight circular \sccd(12,3,33) constructed via Theorem~\ref{Theorem: 2 sccd to build a circular sccd or scccd}}
\label{Table: tscccd(12,3,33) constructed via disjoint capable expansion sets}
\end{table}

\subsection{Difference Methods}

We can also construct \scccd ~using difference methods.

\begin{theorem} [General Difference Construction for \scccd] \label{Theorem: family construction of circular scccd with fixed k}
\quad \newline There exists \tscccd$(2c(k-1)+1,k,c^2 (2 k - 2) + c)$ for all $c\geq 1$. 
\end{theorem}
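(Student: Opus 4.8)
The plan is to realize the design as a cyclic \emph{development} over $\mathbb{Z}_v$ with $v=2c(k-1)+1$, exploiting the two bookkeeping identities $b=c^2(2k-2)+c=cv$ and $(v-1)/2=c(k-1)$. I would take as starter blocks the $k$-subsets of $\mathbb{Z}_v$ given by $B_1=\{0,1,\dots,k-1\}$ and, for $2\le i\le c$, $B_i=\{1,2,\dots,k-1\}\cup\{i(k-1)+1\}$, and list the $b$ blocks of the circular design as $B_i+j$ for $1\le i\le c$, $0\le j\le v-1$, ordered lexicographically by $(j,i)$: that is $B_1,\dots,B_c$, then $B_1+1,\dots,B_c+1$, and so on, closing the cycle from $B_c+(v-1)$ back to $B_1$. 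For $c=1$ this degenerates to developing the single interval $\{0,\dots,k-1\}$ around $\mathbb{Z}_{2k-1}$, which is the known \tscccd$(2k-1,k,2k-1)$; so I would assume $c\ge 2$ in what follows.

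First I would confirm the single-change structure. Inside one period, $B_1\cap B_2=\{1,\dots,k-1\}$ and $B_{i-1}\cap B_i=\{1,\dots,k-1\}$ for $3\le i\le c$, all of size $k-1$ (the ``large'' elements stay distinct since $k\ge 2$, and $2k-1>k-1$). The only non-routine transition is the seam $B_c+j\to B_1+(j+1)$, together with its wrap $B_c+(v-1)\to B_1$, which reduces to the same computation: since $c\ge 2$ gives $c(k-1)+1\ge 2k-1>k$, the large element $c(k-1)+1+j$ of $B_c+j$ lies outside $B_1+(j+1)=\{1,\dots,k\}+j$ and $k+j\notin B_c+j$, so $(B_c+j)\cap(B_1+j+1)=\{1,\dots,k-1\}+j$, again of size $k-1$. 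Hence every pair of consecutive blocks (cyclically) differs in exactly one element.

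Next I would verify tightness by a difference count. The $b=cv$ blocks each cover exactly $k-1$ pairs, for a total of $cv(k-1)=\binom{v}{2}$ by $(v-1)/2=c(k-1)$, so it suffices to show every pair is covered at least once. For $2\le i\le c$ the predecessor of $B_i+j$ is $B_{i-1}+j$, the introduced element is $w=i(k-1)+1+j$, the retained elements are $\{1,\dots,k-1\}+j$, and the $k-1$ covered pairs are exactly those of difference class in $\{(i-1)(k-1)+1,\dots,i(k-1)\}$ (all genuine classes, since $i\le c$). For $i=1$ the predecessor of $B_1+j$ in the cyclic order is the translate $B_c+(j-1)$; the introduced element is the right endpoint $w=j+k-1$ of the interval $B_1+j$, and because $B_1$ is an interval the covered pairs are those of difference class in $\{1,\dots,k-1\}$. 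The sets $\{(i-1)(k-1)+1,\dots,i(k-1)\}$ for $1\le i\le c$ partition $\{1,\dots,c(k-1)\}=\{1,\dots,(v-1)/2\}$, i.e. all nonzero difference classes of $\mathbb{Z}_v$. Finally, on the orbit $\{B_i+j:j\in\mathbb{Z}_v\}$ the map $j\mapsto w$ is a bijection of $\mathbb{Z}_v$, so for each difference class $d$ assigned to that orbit the covered pairs $\{w,w-d\}$ run over all $v$ pairs of class $d$, each exactly once (using that $v$ is odd and $d\neq 0$, so $\{w,w-d\}$ determines $w$). Summing over the $c$ orbits, every pair of $\mathbb{Z}_v$ is covered exactly once; by Lemma~\ref{Lemma: excess} the excess is $0$, so the resulting circular \sccd\ is tight, with $v^*=2c(k-1)+1$, $b^*=cv=c^2(2k-2)+c$.

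The step I expect to be the real obstacle—and the one that forces the choice of starter blocks—is this seam between consecutive periods. The predecessor of $B_1+j$ is not $B_c$ itself but its translate $B_c+(j-1)$, so the element ``introduced'' at that junction is $j+k-1$, not $j$; it is only because $B_1$ is taken to be an interval, so both of its endpoints realize the same distance set $\{1,\dots,k-1\}$ to the interior, that this junction still contributes exactly the small difference classes needed to complete $\{1,\dots,(v-1)/2\}$. Arranging $B_1$ and $B_c$ so that the seam is simultaneously a single change and supplies the missing classes is the delicate point, and I would isolate it as a short lemma before assembling the rest of the verification.
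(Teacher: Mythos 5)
Your construction is the paper's construction up to a relabeling: your starter blocks are translates by $1$ of the paper's $B_i=\{0,\dots,k-2,(i+1)(k-1)\}$, your ordering (interval block first in each period) is a cyclic rotation of the paper's (interval block $B_0$ last in each period), and both proofs verify the single change at the seam and then count difference classes, so the proposal is correct and takes essentially the same approach. If anything, your computation of the covered difference classes as the intervals $\{(i-1)(k-1)+1,\dots,i(k-1)\}$ is the accurate version of what the paper intends (its displayed sets $\pm\{i+1,2(i+1),\dots,(k-1)(i+1)\}$ appear to be a misprint), and your explicit treatment of the seam and the $c=1$ case is a welcome addition.
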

\begin{proof}
Let $B_{i}=\{0,1,....k-2,(i+1)(k-1) \} \subseteq \mathbb{Z}_v$ for $0\leq i \leq c-1$. Let $L_j=(B_1+j, B_2+j,...,B_{c-1}+j,B_0+j)$ and $\cL$ be the concatenation of the $L_j$ in order $0\leq j\leq v$. $\cL$ is a circular single change list of blocks where the element introduced in block $B_i+j$, $1\leq i\leq c-1$ is $(i+1)(k-1)+j$. Thus $B_i$ covers the differences $\pm\{i+1,2(i+1),...,(k-1)(i+1)\}$. The element introduced in $B_0$ is $(i+1)$, thus $B_0$ covers the differences $\pm\{1,2,...,k\}$. This is all the differences in $\mathbb{Z}_v$ so as $\cL$ is formed by the development of $L_i$ all pairs are covered. The \scccd ~is tight as $b=g_2$ and is an integer.
\end{proof}

The \tscccd(13,4,26) where $c=2$ and \tscccd(19,4,57) where $c=3$ are given in 
Table~\ref{Table: tscccd(13,4,2,26)} and Table~\ref{Table: tscccd(19,4,2,57)} respectively.


\begin{table}[H]
\centering
\begin{tabular}{lllllllllllll}
    \small{B}\tiny{$_{1}$} & \small{B}\tiny{$_{2}$} & \small{B}\tiny{$_{3}$} & \small{B}\tiny{$_{4}$} & \small{B}\tiny{$_{5}$} & \small{B}\tiny{$_{6}$} & \small{B}\tiny{$_{7}$} & \small{B}\tiny{$_{8}$} & \small{B}\tiny{$_{9}$} & \small{B}\tiny{$_{10}$} & \small{B}\tiny{$_{11}$} & \small{B}\tiny{$_{12}$} & \small{B}\tiny{$_{13}$} \\
    0     & 0     & 7\ask  & 4\ask & 4      & 4     & 4      & 4     & 4      & 4     & 11\ask & 8\ask & 8      \\
    1     & 1     & 1      & 1     & 8\ask  & 5\ask & 5      & 5     & 5      & 5     & 5      & 5     & 12\ask \\
    2     & 2     & 2      & 2     & 2      & 2     & 9\ask  & 6\ask & 6      & 6     & 6      & 6     & 6      \\
    6\ask & 3\ask & 3      & 3     & 3      & 3     & 3      & 3     & 10\ask & 7\ask & 7      & 7     & 7      \\ \hline
    \small{B}\tiny{$_{14}$} & \small{B}\tiny{$_{15}$} & \small{B}\tiny{$_{16}$} & \small{B}\tiny{$_{17}$} & \small{B}\tiny{$_{28}$} & \small{B}\tiny{$_{19}$} & \small{B}\tiny{$_{20}$} & \small{B}\tiny{$_{21}$} & \small{B}\tiny{$_{22}$} & \small{B}\tiny{$_{23}$} & \small{B}\tiny{$_{24}$} & \small{B}\tiny{$_{25}$} & \small{B}\tiny{$_{26}$}\\
    8     & 8     & 8      & 8     & 8      & 2\ask & 12\ask & 12    & 12     & 12    & 12     & 12    & 12     \\
    9\ask & 9     & 9      & 9     & 9      & 9     & 9      & 3\ask & 0\ask  & 0     & 0      & 0     & 0      \\
    6     & 0\ask & 10\ask & 10    & 10     & 10    & 10     & 10    & 10     & 4\ask & 1\ask  & 1     & 1      \\
    7     & 7     & 7      & 1\ask & 11\ask & 11    & 11     & 11    & 11     & 11    & 11     & 5\ask & 2\ask 
\end{tabular}
\caption{A \tscccd(13,4,26)}
\label{Table: tscccd(13,4,2,26)}
\end{table}

\begin{table}[H]
\centering
\setlength{\tabcolsep}{.55\tabcolsep}
\begin{tabular}{llllllllllllllllllll}

\small{B}\tiny{$_{1}$} & \small{B}\tiny{$_{2}$} & \small{B}\tiny{$_{3}$} & \small{B}\tiny{$_{4}$} & \small{B}\tiny{$_{5}$} & \small{B}\tiny{$_{6}$} & \small{B}\tiny{$_{7}$} & \small{B}\tiny{$_{8}$} & \small{B}\tiny{$_{9}$} & \small{B}\tiny{$_{10}$} & \small{B}\tiny{$_{11}$} & \small{B}\tiny{$_{12}$} & \small{B}\tiny{$_{13}$} & \small{B}\tiny{$_{14}$} & \small{B}\tiny{$_{15}$} \\

0  & 0   & 0   & 7*  & 10* & 4*  & 4   & 4   & 4   & 4   & 4   & 4  & 4   & 4   & 4    \\

1  & 1   & 1   & 1   & 1   & 1   & 8*  & 11* & 5*  & 5   & 5   & 5  & 5   & 5   & 5   \\

2  & 2   & 2   & 2   & 2   & 2   & 2   & 2   & 2   & 9*  & 12* & 6* & 6   & 6   & 6     \\

6* & 9*  & 3*  & 3   & 3   & 3   & 3   & 3   & 3   & 3   & 3   & 3  & 10* & 13* & 7*  \\
\hline

\small{B}\tiny{$_{16}$} & \small{B}\tiny{$_{17}$} & \small{B}\tiny{$_{18}$} & \small{B}\tiny{$_{19}$} & \small{B}\tiny{$_{20}$} & \small{B}\tiny{$_{21}$} & \small{B}\tiny{$_{22}$} & \small{B}\tiny{$_{23}$} & \small{B}\tiny{$_{24}$} & \small{B}\tiny{$_{25}$} & \small{B}\tiny{$_{26}$} & \small{B}\tiny{$_{27}$} & \small{B}\tiny{$_{28}$} & \small{B}\tiny{$_{29}$} & \small{B}\tiny{$_{30}$} \\
11* & 14* & 8* & 8   & 8 & 8  & 8   & 8   & 8   & 8   & 8   & 8  & 15* & 18* & 12*  \\

5   & 5   & 5  & 12* & 15* & 9* & 9   & 9   & 9   & 9   & 9   & 9  & 9   & 9   & 9     \\

6   & 6   & 6  & 6   & 6   & 6  & 13* & 16* & 10* & 10  & 10  & 10  & 10  & 10  & 10  \\

7   & 7   & 7  & 7   & 7   & 7  & 7   & 7   & 7   & 14* & 17* & 11* & 11  & 11  & 11  \\
\hline

\small{B}\tiny{$_{31}$} & \small{B}\tiny{$_{32}$} & \small{B}\tiny{$_{33}$} & \small{B}\tiny{$_{34}$} & \small{B}\tiny{$_{35}$} & \small{B}\tiny{$_{36}$} & \small{B}\tiny{$_{37}$} & \small{B}\tiny{$_{38}$} & \small{B}\tiny{$_{39}$} & \small{B}\tiny{$_{40}$} &\small{B}\tiny{$_{41}$} & \small{B}\tiny{$_{42}$} & \small{B}\tiny{$_{43}$} & \small{B}\tiny{$_{44}$} & \small{B}\tiny{$_{45}$} \\
12  & 12 & 12  & 12  & 12 & 12  & 12  & 12 & 12  & 0*   & 3* & 16*  & 16  & 16  & 16 \\

16* & 0* & 13* & 13  & 13 & 13  & 13  & 13 & 13  & 13   & 13 & 13   & 1*  & 4*  & 17* \\

10  & 10 & 10  & 17* & 1* & 14* & 14  & 14 & 14  & 14  & 14 & 14    & 14  & 14  & 14 \\

11  & 11 & 11  & 11  & 11 & 11  & 18* & 2* & 15* & 15  & 15 & 15    & 15  & 15  & 15  \\
\hline

\small{B}\tiny{$_{46}$} & \small{B}\tiny{$_{47}$} & \small{B}\tiny{$_{48}$} & \small{B}\tiny{$_{49}$} & \small{B}\tiny{$_{50}$} & \small{B}\tiny{$_{51}$} & \small{B}\tiny{$_{52}$} & \small{B}\tiny{$_{53}$} & \small{B}\tiny{$_{54}$} & \small{B}\tiny{$_{55}$} & \small{B}\tiny{$_{56}$} & \small{B}\tiny{$_{57}$}  \\

16  & 16  & 16  & 16  & 16  & 16  & 4* & 7*  & 1*  & 1  & 1   & 1   &    &     &     \\

17  & 17  & 17  & 17  & 17  & 17  & 17 & 17  & 17  & 5* & 8*  & 2*  &    &     &     \\

2*  & 5*  & 18* & 18  & 18  & 18  & 18 & 18  & 18  & 18 & 18  & 18  &    &     &     \\

15  & 15  & 15  & 3*  & 6*  & 0*  & 0  & 0   & 0   & 0  & 0   & 0   &    &     &    

\end{tabular}
\caption{A \tscccd(19,4,57)}
\label{Table: tscccd(19,4,2,57)}
\end{table}

\section{Conclusion}
Using all the lemmas and theorems given here except Theorem~\ref{Theorem: Previously known groups of sccd}
along with the designs given 
we can now build the following designs. 

\begin{corollary}
    \begin{enumerate}
        \item An economical \scccd($v,4,b$) exists for all $v \geq 27$. These are tight if and only if $v\equiv 0,1 \pmod{3}$.
        \item An economical $\sccd(v,5,b)$ exists for all $v\equiv 4,5,6\pmod{16}$, $ v \geq 20$. These are tight if $v\equiv 4,5 \pmod{16}$
        \item An economical $\scccd(v,5,b)$ exists for all $v\equiv 0,1,2 \pmod{16}$, $ v \geq 48$. These are tight if $v\equiv 0,1 \pmod{16}$.
        \item A \tscccd$(2c(k-1)+1,k,c^2 (2 k - 2) + c + 1)$ exists for all $c\geq 1$, $k>2$.
    \end{enumerate}
\end{corollary}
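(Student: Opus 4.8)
The plan is to assemble the machinery of the preceding sections: every part follows by pushing a known base design through one of the recursive constructions and then reading off minimality from Lemma~\ref{Lemma: excess} (a design is economical iff its excess is at most $k-2$ and tight iff its excess is $0$). \textbf{Part 1 ($k=4$).} I would take as ``design~$1$'' the \tsccd$(21,4,69)$ of Table~\ref{Table: tsccd(21,4,69)}, which has excess $0$ and a disjoint-capable outer expansion set, and as ``design~$2$'' an economical \sccd$(v',4,b')$ with $v'\ge 12$ (Theorem~\ref{Theorem: Previously known groups of sccd}), whose excess $e'$ satisfies $e'\le k-2=2$ and $e'=0$ exactly when $v'\equiv 0,1\pmod 3$. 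Applying Theorem~\ref{Theorem: 2 sccd to build a circular sccd or scccd} with $2k-2=6$ (so that $X'$ is relabelled so $|X\cap X'|=6$) yields a circular \sccd$(v'+15,4,b^*)$ with excess $e^*=e'$; since $e^*\le 2$ it is economical, and it is tight precisely when $e'=0$, i.e.\ when $v'\equiv 0,1\pmod 3$, which (as $15\equiv 0\pmod 3$) is the same as $v=v'+15\equiv 0,1\pmod 3$. Running $v'$ over all integers $\ge 12$ covers every $v\ge 27$. For the ``only if'' half, a tight \scccd$(v,4,b)$ would force $g_2(v,4,2)=\binom v2/3$ to be an integer, which fails exactly when $v\equiv 2\pmod 3$.

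\textbf{Part 2 ($k=5$, linear).} First I would show by induction on $t\ge 0$ that there is a \tsccd$(20+16t,5,b_t)$ carrying an outer expansion set that uses both $U_0$ and $U_b$; the base case $t=0$ is the \tsccd$(20,5,46)$ of \cite{Phillips_20} recorded in Corollary~\ref{Corollary: tsccd(36,5,3,156) with disjoint capable expansion set exists}. For the inductive step apply Theorem~\ref{Theorem: esccd + tsccd = esccd} with the inductive design as design~$1$ and \tsccd$(20,5,46)$ as design~$2$ (with $|X\cap X'|=k-1=4$): since $20+20-4=36$ this gives a \tsccd$(20+16(t+1),5,b_{t+1})$ of excess $0$ which still has such an expansion set, because design~$1$ contributes an expansion set containing $U_0$ while the $U'_b$ of design~$2$ remains the final unchanged subset. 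These settle $v\equiv 4\pmod{16}$. For $v\equiv 5\pmod{16}$ apply the $v+1$ construction (Proposition~\ref{Prop: circular sccd(v+1,k)}) to \tsccd$(20+16t,5,b_t)$ to obtain a tight \sccd$(21+16t,5,b_t+\tfrac{20+16t}{4})$. For $v\equiv 6\pmod{16}$ apply Corollary~\ref{Corollary: tsccd(v) -> esccd(v+2)} to get an \esccd$(22+16t,5,b')$ of excess $k-2=3$, economical but not tight.

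\textbf{Part 3 ($k=5$, circular).} Here I would take design~$1$ to be the \tsccd$(36,5,156)$ of Corollary~\ref{Corollary: tsccd(36,5,3,156) with disjoint capable expansion set exists} (excess $0$, disjoint-capable expansion set) and design~$2$ an economical \sccd$(v',5,b')$ with $v'\equiv 4,5,6\pmod{16}$, $v'\ge 20$, from Part~2, with excess $e'=0$ if $v'\equiv 4,5$ and $e'=3$ if $v'\equiv 6$. Theorem~\ref{Theorem: 2 sccd to build a circular sccd or scccd} with $2k-2=8$ then produces a circular \sccd$(v'+28,5,b^*)$ of excess $e^*=e'\le 3=k-2$, hence economical, and tight iff $e'=0$, i.e.\ iff $v'\equiv 4,5$, which (as $28\equiv 12\pmod{16}$) is iff $v\equiv 0,1\pmod{16}$; as $v'$ ranges over its values $v=v'+28$ ranges over exactly $\{v\ge 48:v\equiv 0,1,2\pmod{16}\}$. \textbf{Part 4} is immediate from Theorem~\ref{Theorem: family construction of circular scccd with fixed k}.

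The main obstacle is bookkeeping rather than a single hard step: one must check that the decorations on expansion sets (``outer'', ``uses both $U_0$ and $U_b$'', ``disjoint-capable'') are inherited through each recursive application in exactly the form the next construction demands, and that the relabellings needed to arrange $|X\cap X'|=2k-2$ in Theorem~\ref{Theorem: 2 sccd to build a circular sccd or scccd} are genuinely available for the base designs used as ``design~$2$''. A secondary, purely arithmetic, check is that $v'+15$, $v'+28$, and the $v+1,v+2$ shifts sweep out the stated residue classes with the stated lower bounds and no gaps.
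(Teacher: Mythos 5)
Your proposal is correct and follows essentially the same route as the paper: the same key base designs (the disjoint-capable \tsccd(21,4,69), the \tsccd(20,5,46), and the \tsccd(36,5,156)) are fed through the same recursive constructions, with the excess bookkeeping of Lemma~\ref{Lemma: excess} deciding economical versus tight, the only cosmetic difference being that for $k=4$ the paper also lists the intermediate linear designs on $12,15,18$ points where you simply invoke the known existence result. Both you and the paper leave implicit the relabelling/first-and-last-block condition needed to apply Theorem~\ref{Theorem: 2 sccd to build a circular sccd or scccd}, which you rightly flag as the main bookkeeping obstacle; note also that item 4's block count $c^2(2k-2)+c+1$ differs by one from what Theorem~\ref{Theorem: family construction of circular scccd with fixed k} actually yields, namely $c^2(2k-2)+c=g_2$, so the statement (not your argument) appears to carry a typo.
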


\begin{proof}


To prove (1) we use a \tsccd(12,4,21), a \tsccd(15,4,35), a \tsccd(18,4,51), and the disjoint capable \tsccd(21,4,69) which are provided in the black blocks of Table~\ref{Table: tsccd(12,4,2,21)}, the red blocks of Table~\ref{Table: tsccd(15,4,34)}, Table~\ref{Table: tsccd(18,4,2,50)} and Table~\ref{Table: tsccd(21,4,69)} respectively. These tables in conjunction with Theorem~\ref{Theorem: esccd + tsccd = esccd}, Theorem~\ref{Theorem: Existance tsccd(v,4,2)},  Theorem~\ref{Theorem: sccd + sccd = circular sccd}, Proposition~\ref{Prop: circular sccd(v+1,k)}, and Theorem~\ref{Theorem: sccd(v) -> sccd(v+2)} complete the proof of (1).

To prove (2) we use the \tsccd(20,5,46) from the black blocks of Table~\ref{Table: tsccd(20,5,2,46) - origional} in conjunction with Proposition~\ref{Prop: circular sccd(v+1,k)}, Theorem ~\ref{Theorem: esccd + tsccd = esccd}, and Theorem~\ref{Theorem: sccd(v) -> sccd(v+2)} .

To prove (3) we use the \tscccd(20,5,46)  from the black blocks of Table~\ref{Table: tsccd(20,5,2,46) - origional} and the \tsccd(36,5,156) from Table~\ref{Table: tsccd(36,5,2,156) disjoint capable} in conjunction with (2), Proposition~\ref{Prop: circular sccd(v+1,k)}, Theorem~\ref{Theorem: sccd(v) -> sccd(v+2)}, and Theorem~\ref{Theorem: 2 sccd to build a circular sccd or scccd}.

To prove (4) we use the construction given in Theorem~\ref{Theorem: family construction of circular scccd with fixed k}.

\end{proof}

\begin{table}[H]
    \setlength{\tabcolsep}{.55\tabcolsep}
    \caption{A \tsccd(18,4,50) with outer expansion set \cite{tsccd}. }
    \label{Table: tsccd(18,4,2,50)}
    \begin{tabular}{p{10pt}p{10pt}p{10pt}p{10pt}p{10pt}p{10pt}p{10pt}p{10pt}p{10pt}p{10pt}p{10pt}p{10pt}p{10pt}p{10pt}p{10pt}p{10pt}p{10pt}p{10pt}p{10pt}p{10pt}p{10pt}}
        \small{B}\tiny{$_{1}$} & \small{B}\tiny{$_{2}$} & \small{B}\tiny{$_{3}$} & \small{B}\tiny{$_{4}$} & \small{B}\tiny{$_{5}$} & \small{B}\tiny{$_{6}$} & \small{B}\tiny{$_{7}$} & \small{B}\tiny{$_{8}$} & \small{B}\tiny{$_{9}$} & \small{B}\tiny{$_{10}$} & \small{B}\tiny{$_{11}$} & \small{B}\tiny{$_{12}$} & \small{B}\tiny{$_{13}$} & \small{B}\tiny{$_{14}$} & \small{B}\tiny{$_{15}$} & \small{B}\tiny{$_{16}$} & \small{B}\tiny{$_{17}$} & \small{B}\tiny{$_{18}$} & \small{B}\tiny{$_{19}$} & \small{B}\tiny{$_{20}$} \\
        1\ask & 1 & 1 & 1 & 1 & 1 & 1 & 1 & 1 & 1 & 1 & 1 & 1 & 1 & 1 & 4\ask & 4 & 4 & 4 & 4 \\
        2\ask & 2 & 2 & 2 & 2 & 2 & 2 & 2 & 2 & 2 & 2 & 2 & 16\ask & 17\ask & 17 & 17 & 17 & 17 & 17 & 17\\
        3\ask & 3 & 6\ask & 7\ask & 8\ask & 9\ask & 10\ask & 11\ask & 12\ask & 13\ask & 14\ask & 15\ask & 15 & 15 & 18\ask & 18 & 18 & 18 & 18 & 18\\
        4\ask & 5\ask & 5\car &  5 &  5 &  5 &  5 &  5 &  5 &  5 &  5 &  5 &  5 &  5 &  5 &  5 & 6\ask\car & 7\ask & 8\ask & 9\ask  \\ \hline
        
        \small{B}\tiny{$_{21}$} & \small{B}\tiny{$_{22}$} & \small{B}\tiny{$_{23}$} & \small{B}\tiny{$_{24}$} & \small{B}\tiny{$_{25}$} & \small{B}\tiny{$_{26}$} & \small{B}\tiny{$_{27}$} & \small{B}\tiny{$_{28}$} & \small{B}\tiny{$_{29}$} & \small{B}\tiny{$_{30}$} & \small{B}\tiny{$_{31}$} & \small{B}\tiny{$_{32}$} & \small{B}\tiny{$_{33}$} & \small{B}\tiny{$_{34}$} & \small{B}\tiny{$_{35}$} & \small{B}\tiny{$_{36}$} & \small{B}\tiny{$_{37}$} & \small{B}\tiny{$_{38}$} & \small{B}\tiny{$_{39}$} & \small{B}\tiny{$_{40}$} \\ 
        4 & 2\ask & 10\ask & 11\ask & 12\ask & 13\ask & 13  & 13 & 13 & 13 & 13 & 13 & 13 & 13 & 13 & 13 & 13 & 3\ask & 3 & 16\ask \\
        17 & 17 & 17 & 17 & 17 & 17 & 17 & 17 & 15\ask & 15 & 15 & 15 & 15 & 15 & 15 & 15 & 15 & 15 & 7\ask & 7\\
        18 & 18 & 18 & 18 & 18 & 18 & 18 & 18 & 18 & 4\ask & 4 & 4 & 4 & 6\ask & 6 & 6 & 9\ask & 9 & 9 & 9 \\
        16\ask  & 16 & 16 & 16 & 16 & 16 & 3\ask & 14\ask & 14\car & 14 & 10\ask & 11\ask & 12\ask & 12 & 7\ask & 8\ask & 8 & 8 & 8 & 8\car \\ \hline

        \small{B}\tiny{$_{41}$} & \small{B}\tiny{$_{42}$} & \small{B}\tiny{$_{43}$} & \small{B}\tiny{$_{44}$} & \small{B}\tiny{$_{45}$} & \small{B}\tiny{$_{46}$} & \small{B}\tiny{$_{47}$} & \small{B}\tiny{$_{48}$} & \small{B}\tiny{$_{49}$} & \small{B}\tiny{$_{50}$} \\
        16 & 16 & 16 & 10\ask & 10 & 10 & 10 & 10 & 10 & 14\ask \\
        7 & 6\ask  & 6 & 6 & 6 & 12\ask & 12 & 12 & 12 & 12 \\
        9 & 9 & 3\ask & 3 & 3 & 3 & 7\ask & 9\ask & 8\ask & 8 \\
        14\ask & 14 & 14 & 14\car& 11\ask & 11 & 11 & 11 & 11 & 11\car 
    \end{tabular}
\end{table}

With McSorely's \tscccd(9,4,12) and \tscccd(10,4,15), we need to find \tscccd$(v,4,b)$ for $v = 12,13,15,16,18,19,21,22,24,25$ to show a \tscccd$(v,4,b)$ exists for every admissible $v\geq 9$. 

The existence of a \tsccd(28,5,94) with an expansion set would allow the use of Proposition~\ref{Prop: circular sccd(v+1,k)}, Theorem~\ref{Theorem: tsccd +tsccd = tsccd}, and Theorem~\ref{Corollary: create esccd(v+2,k)} to show an economical \sccd($v,5,b$) exists for $v \equiv 4,5,6 \pmod{8}$, $v\geq 20$. These are tight if $v\equiv 4,5 \pmod{8}$. 
Using these designs in conjunction with Theorem~\ref{Theorem: 2 sccd to build a circular sccd or scccd} and the disjoint-capable \sccd(36,5,156) given in Table~\ref{Table: tsccd(36,5,2,156) disjoint capable} would allow us to construct \scccd($v,5,b$) for $v \equiv 0,1,2 \pmod{8}, v\geq 48$. 

We require a \scccd($v,5,b$) for $v=24,25,26,32,33,34,40,41,42$ to construct all \scccd($v,5,b$), for $v\equiv 0,1,2 \pmod{8}, v\geq 24$.
Finding an \esccd$(v,5,b)$ for $v=23,24,25,26,27,30,31,32,33,34,35$ would allow us to construct an \sccd$(v,5,b)$ for every admissible $v$. 

Finally finding \esccd$(v,5,b)$ and \tsccd$(v,5,b)$ for $ v = 24,25,32,33,40,41$ with a disjoint-capable outer set, would show the existence of a \escccd$(v,5,b)$ for every admissible $v$.

\bibliographystyle{plain}
\bibliography{References}

\end{document}